\documentclass[letter,11pt]{amsart}

\usepackage{hyperref}
\usepackage{amsmath,amsfonts}
\usepackage{amsthm}
\usepackage{amssymb}
\usepackage{url}
\def\R{\mathbb{R}}

\def\B{\mathbb{B}}

\usepackage{multirow}
\newcommand{\dsum}{\displaystyle\sum}
\usepackage{adjustbox}
\usepackage{natbib}
\usepackage{natbib}

\newtheorem{theorem}{Theorem}
\newtheorem{cor}[theorem]{Corollary}
\newtheorem{lem}[theorem]{Lemma}

\newtheorem{example}[theorem]{Example}
\newtheorem{rmk}[theorem]{Remark}

\let\origmaketitle\maketitle
\def\maketitle{
  \begingroup
  \def\uppercasenonmath##1{} % this disables uppercasing title
  \let\MakeUppercase\relax % this disables uppercasing authors
  \origmaketitle
  \endgroup
}
\usepackage[ruled,vlined]{algorithm2e}
\usepackage{tikz}
\usetikzlibrary{calc,3d}

\usepackage{pgfplots}
\pgfplotsset{compat=newest}
\usetikzlibrary{decorations.markings}

\usepackage{caption}
\usepackage{subcaption}

\begin{document}

\title[Location of Leak Detection Devices]{\Large Optimal coverage-based placement of static leak detection devices for pipeline water supply networks}

\author[V. Blanco \MakeLowercase{and} M. Mart\'inez-Ant\'on]{{\large V\'ictor Blanco$^{a,b}$ and  Miguel Mart\'inez-Antón$^{a,b}$}\medskip\\
$^a$Institute of Mathematics (IMAG), Universidad de Granada\\
$^b$ Dpt. Quant. Methods for Economics \& Business, Universidad de Granada
}

\address{IMAG, Universidad de Granada, SPAIN.}
\email{vblanco@ugr.es}
\address{IMAG, Universidad de Granada, SPAIN.}
\email{mmanton@ugr.es}

\date{\today}

\begin{abstract}
In this paper, we provide a mathematical optimization-based framework to determine the location of leak detection devices along a network. Assuming that the devices are endowed with a known coverage area, we analyze two different models. The first model aims to minimize the number of devices to be located in order to (fully or partially) cover the volume of the network. In the second model, the number of devices is given, and the goal is to locate them to provide a coverage volume as broad as possible. Unlike other approaches in the literature, in our models, it is not assumed that the devices are located on the network (nodes or edges) but in the whole  space and that the different segments in the networks may be partially covered, which allows for more flexible coverage. We also derive a method to construct initial solutions as well as a math-heuristic approach for solving the problem for larger instances. We report the results of a series of experiments on real-world water supply pipeline networks, supporting the validity of our models.
\end{abstract}

% Fill in data. If unknown, outcomment the field
\keywords{Facility Location, Leak Detection, Coverage Problems, Mixed Integer Non Linear Programming, Water Supply Networks.}% \HISTORY{}

\maketitle

\section{Introduction}\label{sec:1}

The design of leak detection systems on water supply networks has attracted great interest due to the economic and environmental impact associated with the systematic loss of this resource.  Needless to say, the important role water plays in our social and economic life system, such as in agriculture, manufacturing, the production of electricity, and sustaining human health.

In urban networks, where the supply pipelines network is buried, an average of 20\% to 30\% of the supply water is periodically lost~\citep{el2019leak}. This average exceeds 50\% in places with less technological development where poor maintenance makes the system more vulnerable. It is also known~\citep{el2019leak} that $\sim$70\% of the amount of wasted water is provoked by losses caused by leaks in modern networks. Pipe internal roughness or friction factors are the main causes of leakage of a water pipeline network~\citep{Task1987,el2014condition}, and as the pipelines get older, they become more susceptible to damage. In developed countries, it is expected that the annual disbursements for water leaks in their supply networks would be close to 10 billion USD, of which 2 billion would go to costs for damages due to water loss and 8 billion to social effects costs.  Additionally, the International Institute of Water Management forecasts that 33\% of the world's population will experience water scarcity by 2025~\citep{seckler1998world}. Thus, the efficient management of water supplies is and will be one of the main concerns of water authorities throughout the world.

Most of the efforts related to the management of water supply networks have focused on the detection of leaks once they occur. Rapid detection of the leak location is then crucial to minimize the impact of the leaks.  \cite{hamilton2009alc} suggests three different  phases in the  leak detection problem: \emph{localization}, \emph{location} and \emph{pinpointing}. In the \emph{localization} phase, the goal is to detect if a leak has occurred within a certain network segment after a suspected leak. There are several proposed Machine Learning based methodologies to estimate leak probabilities or to classify the event leak/no leak  based on historic leakage datasets~\citep{el2016locating,li2011development}. In the \emph{location} phase, the uncertain area where the leak is localized is narrowed to $\sim 30$cm. Finally, in the \emph{pinpointing} phase, the exact position of the leak is to be determined with a pre-specified accuracy of $\sim 20$ cm by using hydrophones and/or geophones~\citep{fantozzi2009experience,royal2011site}. Previously to the determination of the position of the leak, a vast amount of literature have being devoted to modeling the determination of false/true leak alarms by the different available devices~\citep{cody2020linear,cody2020detecting}.

Another line of research on this topic is the design of control devices and methods for the accurate and quick detection of leakages. This is the case of the design of devices that accurately detect the leak within a restricted area~\citep{khulief2012acoustic}. Nevertheless, these devices are expensive and the adequate placement of the limited units must be strategically determined. One of the most popular approaches is by partitioning the network in \emph{district metered areas} where the flow and the pressure are monitored (leaks can be detected  by a decrease of flow and pressure) by means of leak-detection devices at each of this areas~\citep[see e.g.][]{puust2010review}. However, one still has to decide the number of devices and their exact locations in each of the district-metered areas.

There are different types of devices designed to help in the different leak detection phases which are classified into static and dynamic devices. Static devices, such as sensors or data loggers, are usually located on the network, at utility holes, or directly on the ground, attached to the network. They keep a data transmission flow with a central server to detect and localize leaks. In contrast, dynamic devices are portable and used in the location and pinpointing phases on more specific areas where the leak was suspected to occur. Whereas static devices can be automated, dynamic ones must be controlled on-site by humans. Different technologies have been designed for the two different types of devices~\cite[see e.g.][for further details]{li2015review}.

Most of the research on static leak detection systems is focused on the adequate estimation of the signals transmitted from the devices to the central server to detect an actual leak~\citep{mohamed2012leak,tijani2022improving}. A few works analyze the optimal placement of a given number of static devices on a finite number of potential placements based on the capability of each of the potential places to detect a leak~\citep{venkateswaran2018impact}, or in the use of historic data to place the devices at the more convenient places~\citep{casillas2013optimal}. 

This paper provides a technological decision support tool to help in the design of leak detection systems via the optimal placement of static devices. Instead of assuming that the devices are to be placed in a finite set of pre-specified potential places, they are allowed to be located in the whole space where the network lives, i.e. in the whole town, city, or district. We analyze, in this framework, two different strategies to place the devices. On the one hand, we derive a method to find the smallest number of devices (and their placements) required to detect \textit{any} leak in the whole network or in a given percent of it. Since the devices may be costly, covering a large amount of the network might be expensive, and we also derive a method, that fixes the number of devices to be located based on a budget, and finds their optimal placements to cover as much volume of the network as possible.

The models that we propose belong to the family of Continuous Covering Location Problems. In this type of problem, the goal is to find the position of one or more services (in this case, the leak detection devices), each of them endowed with a \emph{coverage area},  i.e., a limited region where the service/signal  can be provided. Covering Location Problems are usually classified into (Partial) Set Covering Location Problems--(P)SCLP and Maximal Coverage Location Problems--MCLP. The goal of the (P)SCLP is to determine the minimum number of services (or equivalently the minimum set-up cost for them) to cover (part of) a given demand. In MCLP the number of services is known and the goal is to place them to cover as much demand as possible. These problems have been widely studied in the literature in case the given demand points to cover are finite and planar, and the coverage areas are Euclidean disks~\cite[see][for further information on this problems]{garcia2015covering,murray2007coverage}. Several extensions of these problems have been studied, by imposing connectivity between the services in higher dimensional spaces and different coverage areas~\citep{blanco2021continuous}, multiple types of services~\citep{blanco2021multitype}, under uncertainty~\citep{hosseininezhad2013continuous}, regional demand~\citep{blanquero2016maximal}, or with ellipsoidal coverage areas~\citep{tedeschi2021new}. 

We provide versions of the PSCLP and the MCLP, where instead of covering demand points, the goal is to cover lengths/volumes of the water supply pipeline networks, and the services to be located are the devices to detect leaks. The goal is either to find the number of devices and their optimal placement to fully or partially cover the whole length of the network (in the case of the PSCLP) or to find the placements of a given number of devices to maximize the length of the network which is covered by the devices. We assume that the coverage areas of the devices are $\ell_\tau$-norm based balls and that covering a part of the network with these shapes implies that the device is able to detect a leak there. As far as we know, this problem has never been investigated before despite its practical applications. \cite{murray2007coverage} analyze planar covering problems with generalized types of demand, as line segments or polygons, but where partial coverage is not allowed. In our approach, a part of the different elements taking part of the network is allowed to be covered, being the overall coverage maximized or lower bounded in our models.

We derive mathematical programming formulations for our model. First, we analyze the simple case when a single device is to be located. Next, we extend the model to the case of the simultaneous location of more than one device. We propose Mixed Integer Non-Linear Programming formulations for the problems, that are reformulated as Mixed Integer Second-Order Cone Optimization problems. We analyze some properties of the model that allow us to develop a strategy to construct initial solutions by solving an Integer Linear Optimization problem. We also design a math-heuristic approach to approximately the problem by solving, sequentially, the single-device versions of the problem. We have tested all our approaches in real-world urban water networks. In addition to  analyzing the computational performance of our algorithms, we provide managerial insights about the locations obtained with our approaches, compared to the application of the classical algorithms in the literature, namely node and edge-restricted covering problems.

The rest of the paper is organized as follows. In section \ref{sec:2} we introduce the problem under analysis and illustrate some of the solutions that can be obtained. Section \ref{sec:3} is devoted to analyzing the problem of locating a single device, which will be helpful in the development of approximation algorithms for the multi-device case. In section \ref{sec:4} the general case is analyzed. We provide Mixed Integer Non-Linear Programming formulations for the maximal and partial set covering location problems and a deep study of them. We also provide a method to construct initial solutions for the problem based on the geometrical properties of the solutions, and a math-heuristic approach based on solving, iteratively, single-device instances. The results of our computational experiments on real-world urban pipeline networks are reported in Section \ref{sec:5}. Finally, in Section \ref{sec:6} we draw some conclusions and future research lines on the topic.

\section{Length-coverage location of devices}\label{sec:2}

In this section, we introduce the problem under study and fix the notation for the rest of the sections.

Let $G=(V,E;\Omega)$ be an undirected network with a set of nodes $V$, a set of edges $E$, and non-negative edge weights $\Omega$. The graph represents an urban water pipeline network, where the weights are the diameter or roughness of each of the pipelines in the network, which together with its length will allow us to compute the covered volume of the network. We assume that the graph is embedded in $\R^d$, i.e., $V \subseteq \R^d$  and each (undirected) edge $e=\{o_e, f_e\}\in E$ is  identified with a segment in $\R^d$, with endnodes $o_e$ and $f_e$ in $V$. Abusing notation, we identify edge $e\in E$ either with the segment induced by its end nodes, i.e., $e \equiv [o_e, f_e]$ or with the vector of $\R^d$ associated with them, i.e., $e\equiv f_e-o_e$.

A device located at  $X \in \R^d$ is endowed with a ball-shaped coverage area in the form:
$$
\B_R(X) = \{z \in \R^d: \|X-z\|\leq R\}
$$
where $R>0$ is the given coverage radius. We assume that $\|\cdot\|$ is an $\ell_\tau$-based norm with $\tau\geq 1$ or a polyhedral norm. Note that each of the devices can be endowed with a different radius and a different norm, based on their technical specifications.

For each edge $e\in E$, and a finite set of positions for the devices $\mathcal{X} \subset \R^d$, we denote by ${\rm CovWLength}_G(e,\mathcal{X})$ the weighted length of the edge covered by the devices. Let us denote by ${\rm TotWLength_G}$ the total weighted length of the network, i.e., ${\rm TotWLength_G} = \dsum_{e\in E} \omega_e\|o_e-f_e\|$ with $\omega_e\in \Omega$. 

We analyze in this paper two covering location problems  to determine the position of the leak detection devices, namely the Partial Set Network Length Covering Location Problem (PSNLCLP) and the Maximal Network Length Covering Location Problem  (MNLCLP). In both cases, the goal is to  place the different types of devices in the space to accurately detect a leak on the network. 

\begin{description}
\item[{\bf Partial Set Network Length Covering Location Problem (PSNLCLP)}]

The goal of this problem is to determine the minimum number of devices and their positions in $\R^d$ in order to cover at least $100\gamma\%$ of the weighted length of the network, for a given $\gamma \in (0,1]$. 

The PSNLCLP can be mathematically stated as:
$$
\min_{\mathcal{X} \subseteq \R^d:\atop {\sum_{e\in E} {\rm CovWLength}_G(e,\mathcal{X}) \geq \gamma {\rm TotWLength_G}}} |\mathcal{X}|
$$

The number of devices in the objective function can be replaced by the overall set-up costs for them, in whose case, the model read:
$$
\min_{\mathcal{X} \subseteq \R^d:\atop {\sum_{e\in E} {\rm CovWLength}_G(e,\mathcal{X}) \geq \gamma {\rm TotWLength_G}}} \dsum_{X \in \mathcal{X}} f_X
$$
being $f_X$ a given set-up cost for the device $X \in \mathcal{X}$.

For the sake of simplicity, in this paper, we analyze the first model, although all the results are also valid for the second one.

\item[{\bf Maximal Network Length Covering Location Problem (MNLCLP)}]

In this problem the number of devices to locate is given, $p\geq 1$, and the goal is to find their positions to maximize the weighted covered length of the network. 
While the MNLCLP consists of solving
$$
\max_{\mathcal{X} \subseteq \R^d:\atop |\mathcal{X}|=p} \sum_{e\in E} {\rm CovWLength}_G(e,\mathcal{X})
$$

\end{description}

Both in the PSNLCLP and the MNLCLP, one can provide different coverage for the different devices that want to be located. In the PSNLCLP, it is assumed that the number of available devices is unlimited (although minimized), but one can assume that different types of devices with different specifications are available. In the MNLCLP, since exactly $p$ of them are to be located, different radii can be specified for each of them.

In the following example, we illustrate the two problems described above analyzed in a real network (see Section \ref{sec:5}).
\begin{example}\label{ex:1}
Let us consider the network drawn in Figure \ref{fig:jilin0}. There, each edge has a different weight indicating the diameter of the pipeline (as larger the weight thicker the line in the plot). Devices with identical Euclidean disk coverage areas of radius $0.5$ are to be located (the network has been scaled to fit in a disk of radius $5$). In Figure \ref{fig:jilin} we show the solutions of the PSNLCLP for $\gamma=0.75$ (right) and the solution of MNLCLP for $p=5$ (left). There, the centers are highlighted as red stars, the covered segments of the network are colored in blue, and the coverage of the devices are the red disks.

Note that the flexible approach that we propose does not force the devices to be located at the nodes or edges of the network, being able to cover a larger amount of the volume of the network with a smaller number of devices.

\begin{figure}
\centering
\includegraphics[width=0.6\linewidth]{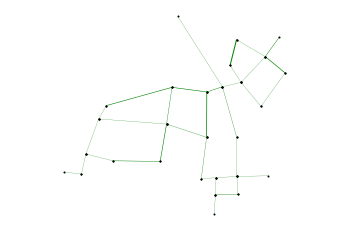}
\caption{Pipeline urban network of Example \ref{ex:1}.\label{fig:jilin0}}
\end{figure}

\begin{figure}
\centering
\includegraphics[width=0.45\linewidth]{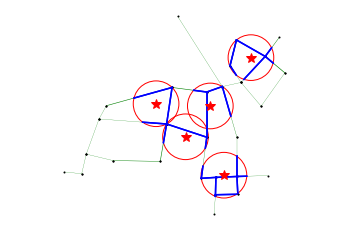}~
\includegraphics[width=0.45\linewidth]{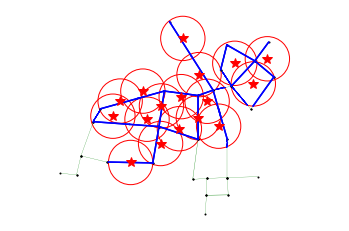}
\caption{Solutions of MNLCLP ($p=5$) and PSNLCLP ($\gamma=0.75$) of the network of Example \ref{ex:1}.\label{fig:jilin}}
\end{figure}

\end{example}

\begin{example}
The two problems that we introduce here are defined in a very general framework ($d$-dimensional spaces, networks with no further assumptions, and general coverage shapes). In Figure \ref{fig:l1linf} we show solutions for the MNLCLP for the same instance that in Example \ref{ex:1}, with $p=5$ but in case the coverage areas are induced by $\ell_1$-norm (left) and $\ell_\infty$-norm (right) balls.  

\begin{figure}[h!]
\includegraphics[width=0.45\linewidth]{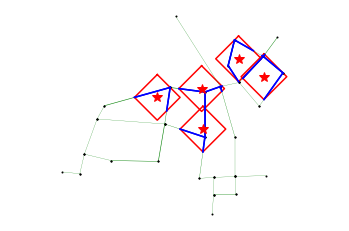}~
\includegraphics[width=0.45\linewidth]{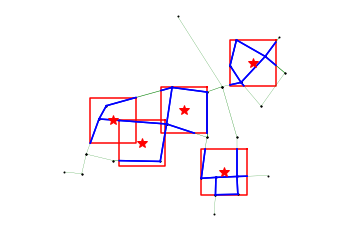}
\caption{Solutions of MNLCLP with $p=5$ for coverage areas defined by $\ell_1$-norm (left) and $\ell_\infty$-norm (right) balls.\label{fig:l1linf}}
\end{figure}
\end{example}

\begin{rmk}
As already mentioned, most covering location problems on networks assume that the centers must be located either on the edges or the nodes of the network~\cite[see e.g.]{berman2011minmax,berman2016covering}. Here, this condition is no longer assumed, allowing the centers to be located at any place in the space where the network lives. This flexibility allows positions for the devices providing a larger coverage of the network. In Figure \ref{fig:jilin_1} we show the solutions of the edge-restricted (left) and node-restricted (right) versions of the MNLCLP, where one can observe that the optimal positions of the devices are different from those obtained for the MNLCLP.

\begin{figure}[h!]
\includegraphics[width=0.45\linewidth]{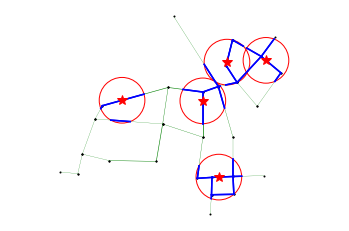}~
\includegraphics[width=0.45\linewidth]{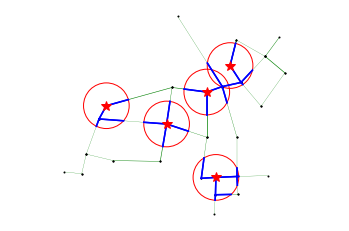}
\caption{Solutions of the edge-restricted and node-restricted versions of MNLCLP for $p=5$ for the network of Example \ref{ex:1}.\label{fig:jilin_1}}
\end{figure}

We have compared the covered lengths of the three problems (MNLCLP, edge-restricted MNLCLP, and node-restricted MNLCLP) for different values of $p$ ($2$, $5$, and $8$), and different radii $R$ ($0.1$, $0.25$, and $0.5$).  In Figure \ref{fig:edgesnodes} we show a bar diagram with the average deviations (for each $p$) of the two restricted versions with respect to the covered length of the general approach that we propose. As can be observed, the solutions of the unrestricted MNLCLP are able to cover more than $6\%$ than the edge-restricted problem and more than $20\%$ than the node-restricted problem. Since undetected leaks may produce fatal consequences in an urban area and leak detection devices are expensive, the use of the solutions of our models is advisable in this situation.
\begin{figure}
\centering
\includegraphics[width=0.45\linewidth]{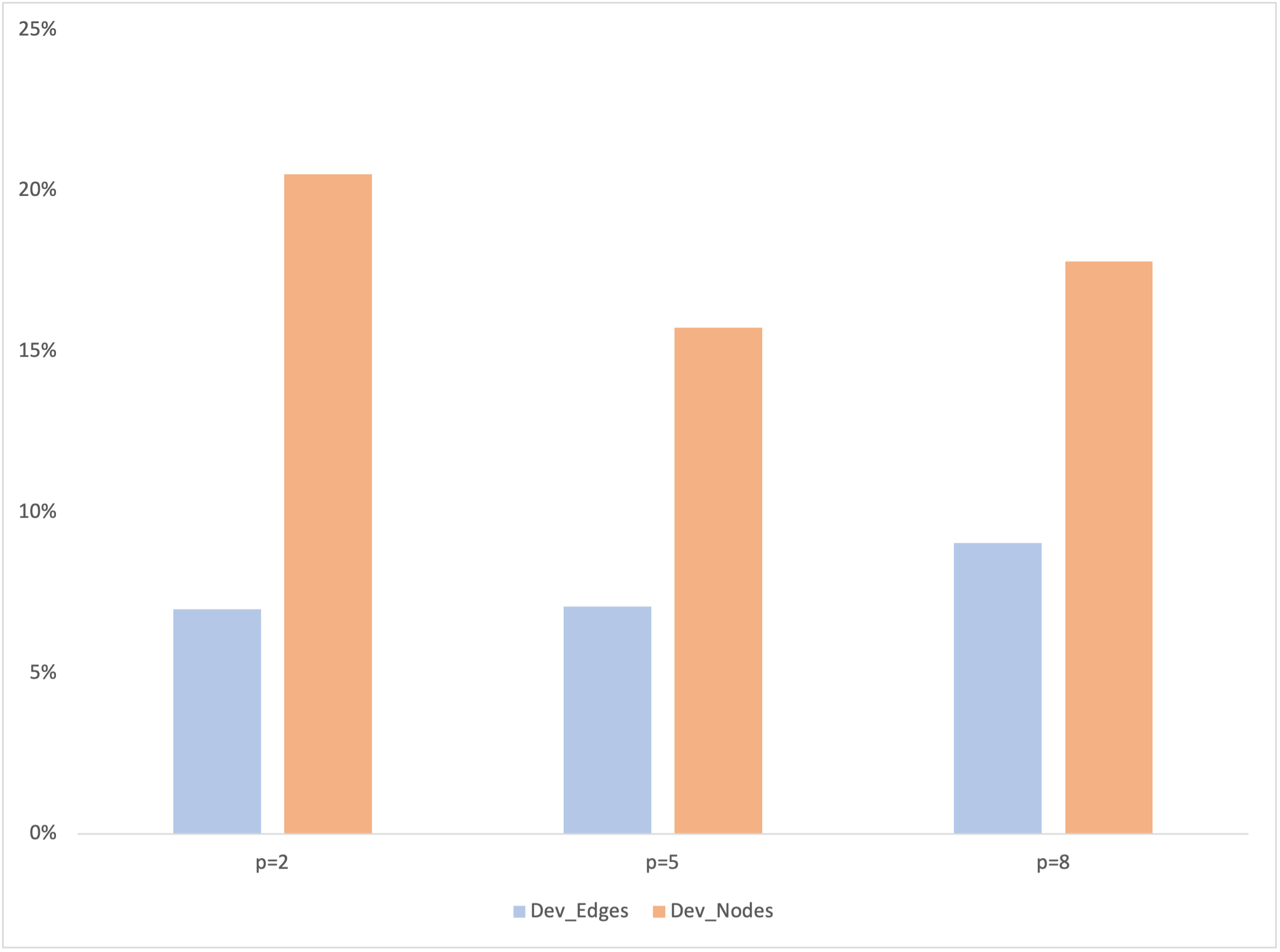}
\caption{Average length coverage deviations between the solutions of MNLCLP and the edges/nodes-restricted versions of the problem.\label{fig:edgesnodes}}
\end{figure}
\end{rmk}

\section{The single-device Maximal Network Length Covering Location Problem}\label{sec:3}

In this section, we first analyze the MNLCLP in case $p=1$ (a single device). We provide a mathematical programming model for the problem that will be useful for the general construction of the multi-device instances of MNLCLP and PSNLCLP derived in this paper.

Let $e\in E$ be an edge in the network and $X\in \R^d$ a given location for a device. In case the coverage area of the device in $X$, $\B_R(X)$, does not \emph{touch} the edge, then the covered length is  clearly zero. Otherwise, since $\B_R(X)$ is a compact and convex body in $\R^d$, $\partial \B_R(X)$, the border of the ball, will touch the segment in two points (that may coincide in case the segment belong to a tangent hyperplane of the ball). These points belong to the segment $[o_e, f_e]$, that can be parameterized as:
$$
Y_e^0 = \lambda_e^0 o_e + (1-\lambda_e^0) f_e \text{ and } Y_e^1 =\lambda_e^1 o_e + (1-\lambda_e^1) f_e 
$$
for some $\lambda_e^0, \lambda_e^1 \in [0,1]$. We can assume without loss of generality that $Y_e^0$ is closer to $o_e$ than $Y^1_e$, so we restrict the $\lambda$-values to  $\lambda_e^0 \leq \lambda_e^1$. 

With the above parameterization, the length of the edge covered by $X$ is $(\lambda_e^1 - \lambda_e^0) L_e$ (here, $L_e$ denotes the length of the edge $e$). 

To derive our mathematical programming formulation for the problem, we use the following sets of decision variables:

$$
z_ e = \begin{cases}
1 & \mbox{ if edge $e$ intersects the device's coverage area,}\\
0 & \mbox{otherwise}
\end{cases}
$$
$$
X: \text{ Coordinates of the placement of the device.}
$$
$$
Y_e^0, Y_e^1: \text{Intersections points of }\partial \B_R(X) \text{ with the edge } e
$$
$$
\lambda_e^0, \lambda_e^1: \text{ Parameterization values in the segment of intersection points }Y_e^0 \text{ and } Y_e^1, \text{ respectively.}
$$

The single-device MNLCLP can be formulated as the following Mathematical Programming Model, that we denote as ($1$-MNLCLP):

\begin{align}
\max & \;\;\; \dsum_{e\in E} \omega_e L_e (\lambda_e^1 - \lambda_e^0) \label{m1:0}\\
\mbox{s.t. } & \|X-Y_e^s\|z_e \leq R, \forall e\in E, s \in \{0,1\}, \label{m1:1}\\
 & Y_e^s =  \lambda_e^s o_e + (1-\lambda_e^s) f_e, \forall e \in E, s\in \{0,1\}, \label{m1:2}\\
 & \lambda_e^0 \leq \lambda_e^1, \forall e\in E, \label{m1:3}\\
  & \lambda_e^1 \leq z_e, \forall e\in E, s\in \{0,1\}, \label{m1:4}\\
 & \lambda_e^0, \lambda_e^1 \geq 0, \forall e\in E, s\in \{0,1\}, \label{m1:5}\\
 & z_e \in \{0,1\}, \forall e\in E, \label{m1:6}\\
 & X \in \R^d. \label{m1:7}
\end{align}

Constraints \ref{m1:1} enforce that in case the device coverage area intersects the edge, the intersection points must be in the coverage area of $X$. This constraint can be  equivalently rewritten as:
$$
\|X-Y_e^s\| \leq R + \Delta (1-z_e), \forall e\in E, s \in \{0,1\}
$$
where $\Delta$  a big enough constant with $\Delta> \max \Big\{\| z_1 -z_2\|: z_1, z_2 \in \{o_e, f_e: e\in E\}\Big\}$. Constraints \eqref{m1:2} are the parameterizations of the intersection points. Constraints \eqref{m1:3} force that $Y_e^0$ is closer to $o_e$ than $Y_e^1$. In case the device does not intersect an edge, by Constraints \eqref{m1:4} fix to zero the coefficients of the parameterization, adding a value of zero to the covered lengths in the objective function. \eqref{m1:5}-\eqref{m1:6} are the domains of the variables.

($1$-MNLCLP) is a Mixed Integer Non-Linear Programming problem because of the discrete variables $z$ and the nonlinear Constraints \eqref{m1:1}. For $\ell_\tau$ or polyhedral norms, these constraints are known to be efficiently rewritten as a set
 of second-order cone constraints (and in the case of polyhedral norms, as linear constraints) becoming a Mixed Integer Second-Order Cone Optimization (MISOCO) problem that can be solved using the off-the-shelf software \cite[see][for further details]{blanco2014revisiting}.

 \subsection{Generating feasible solutions of MNLCLP}\label{sec:3a}

The single-device version of the MNLCLP is already a challenging  combinatorial problem since it  requires computing a \textit{feasible} group of edges which is able to be covered by the device (in addition to the computation of the covered volume). In what follows, we analyze some geometrical properties and algorithmic strategies for this problem, that will result in an Integer Linear Programming formulation to generate good quality initial feasible solutions to this problem. The same ideas will be extended to generate solutions also for the multi-device problem.

The following result, whose proof is straightforward from Constraints \eqref{m1:1} provides a geometrical characterization of the potential position for the device to be located given that the \emph{touched} set of edges is known.
 \begin{lem}
Let $\bar z \in \{0,1\}^{|E|}$ be a feasible solution for $1$-MNLCLP Denote by $C=\{e \in E: \bar z_{e}=1\}$, the edges (total or partially) covered by the device. Then, we get that
\begin{equation}
X \in \bigcap_{e \in C} (e \oplus \B_{R}(0)),\label{cov}\tag{${\rm Cov}$}
\end{equation}
where $\oplus$ stands for the Minkowski sum in $\R^d$.
\end{lem}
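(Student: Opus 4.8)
The plan is to unpack what it means for the device location $X$ to cover a nonempty portion of each edge in $C$, and to recognize the resulting geometric condition as membership in a Minkowski sum. Recall that $e \in C$ means $\bar z_e = 1$, which by Constraints \eqref{m1:1} forces $\|X - Y_e^0\| \leq R$ and $\|X - Y_e^1\| \leq R$, where by Constraints \eqref{m1:2} the points $Y_e^0$ and $Y_e^1$ lie on the segment $[o_e, f_e]$. So the heart of the matter is to show that the existence of \emph{some} point of the edge within distance $R$ of $X$ is exactly the statement $X \in e \oplus \B_R(0)$.

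First I would fix a single edge $e \in C$ and argue the one-edge equivalence $X \in e \oplus \B_R(0) \iff \operatorname{dist}(X, e) \leq R$, where $e$ is identified with the segment $[o_e, f_e]$. For the direction needed here, I would take the covered point $Y_e^0$ (equivalently $Y_e^1$); since $Y_e^0 \in e$ and $\|X - Y_e^0\| \leq R$, the vector $v := X - Y_e^0$ satisfies $v \in \B_R(0)$, and therefore $X = Y_e^0 + v \in e \oplus \B_R(0)$ by the definition of the Minkowski sum. This is immediate from the feasibility of $\bar z$ and requires no further computation. Conversely, any $X$ in $e \oplus \B_R(0)$ can be written as $p + v$ with $p \in e$ and $\|v\| \leq R$, so $p$ is a point of the edge within distance $R$ of $X$, which is precisely the geometric configuration the variables $Y_e^0, Y_e^1, \lambda_e^0, \lambda_e^1$ are designed to represent; this gives a feasible assignment realizing $\bar z_e = 1$.

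Having established the per-edge characterization, I would then intersect over all $e \in C$: since the same point $X$ must simultaneously cover every edge flagged by $\bar z$, it must belong to $e \oplus \B_R(0)$ for \emph{each} such $e$, hence to the intersection $\bigcap_{e \in C}(e \oplus \B_R(0))$, which is exactly \eqref{cov}. The quantification over $e$ is the only genuine step after the single-edge claim, and it is a routine logical conjunction rather than a geometric obstacle.

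I do not anticipate a serious obstacle, which matches the paper's remark that the result follows straightforwardly from Constraints \eqref{m1:1}. The one point deserving care is the degenerate tangency case noted earlier in the excerpt, where $Y_e^0 = Y_e^1$ and the edge meets $\B_R(X)$ in a single point; here the covered length is zero, yet $\bar z_e = 1$ is still feasible, and the Minkowski-sum membership remains valid because a single contact point $p \in e$ with $\|X - p\| \leq R$ already witnesses $X \in e \oplus \B_R(0)$. Thus the characterization holds uniformly whether the covered length is positive or zero, and no separate argument is needed for the boundary-contact situation.
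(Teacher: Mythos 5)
Your proof is correct and follows exactly the route the paper intends: it unpacks Constraints \eqref{m1:1}--\eqref{m1:2} to get a point $Y_e^0\in e$ with $\|X-Y_e^0\|\leq R$, observes that this is precisely membership $X\in e\oplus\B_R(0)$, and intersects over $e\in C$, which is what the paper means by the proof being ``straightforward from Constraints \eqref{m1:1}.'' The extra material (the converse direction and the tangency case) is harmless but not needed for the stated implication.
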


The above result states that the position of the device, $X$, must belong to the intersection of the extended segments induced by the edges in the cluster $C$. In Figure \ref{caps} (left picture) we illustrate the shape of $e \oplus \B_{R}(0)$ for a given edge $e\in E$. In Figure \ref{caps} (right picture) we show the intersection of three of these types of sets, where a device covering the three segments is allowed to be located.

\begin{figure}
\begin{center}\begin{tabular}{ccc}
\includegraphics[width=0.3\textwidth, angle=85]{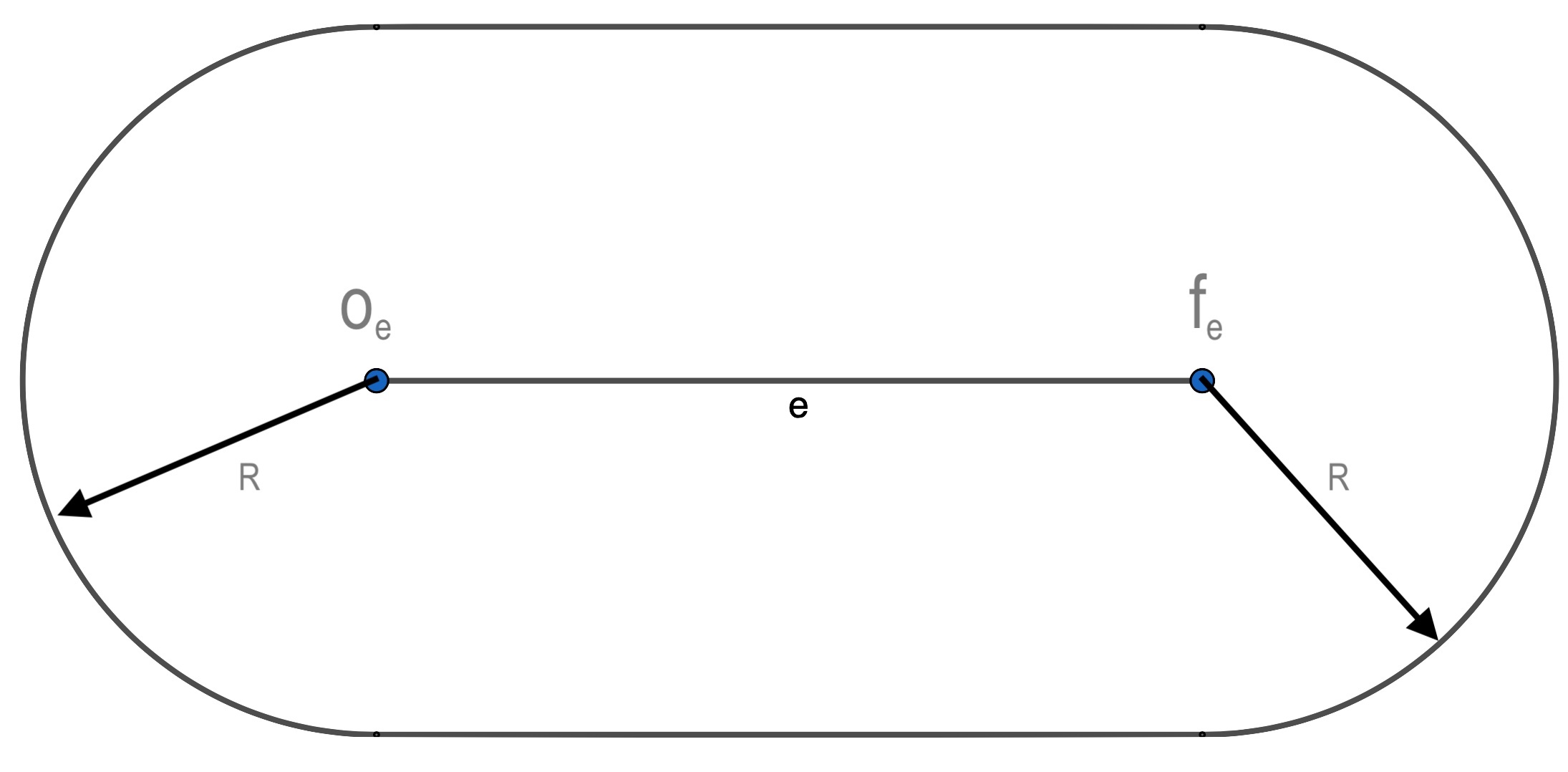} & &\includegraphics[width=0.4\textwidth]{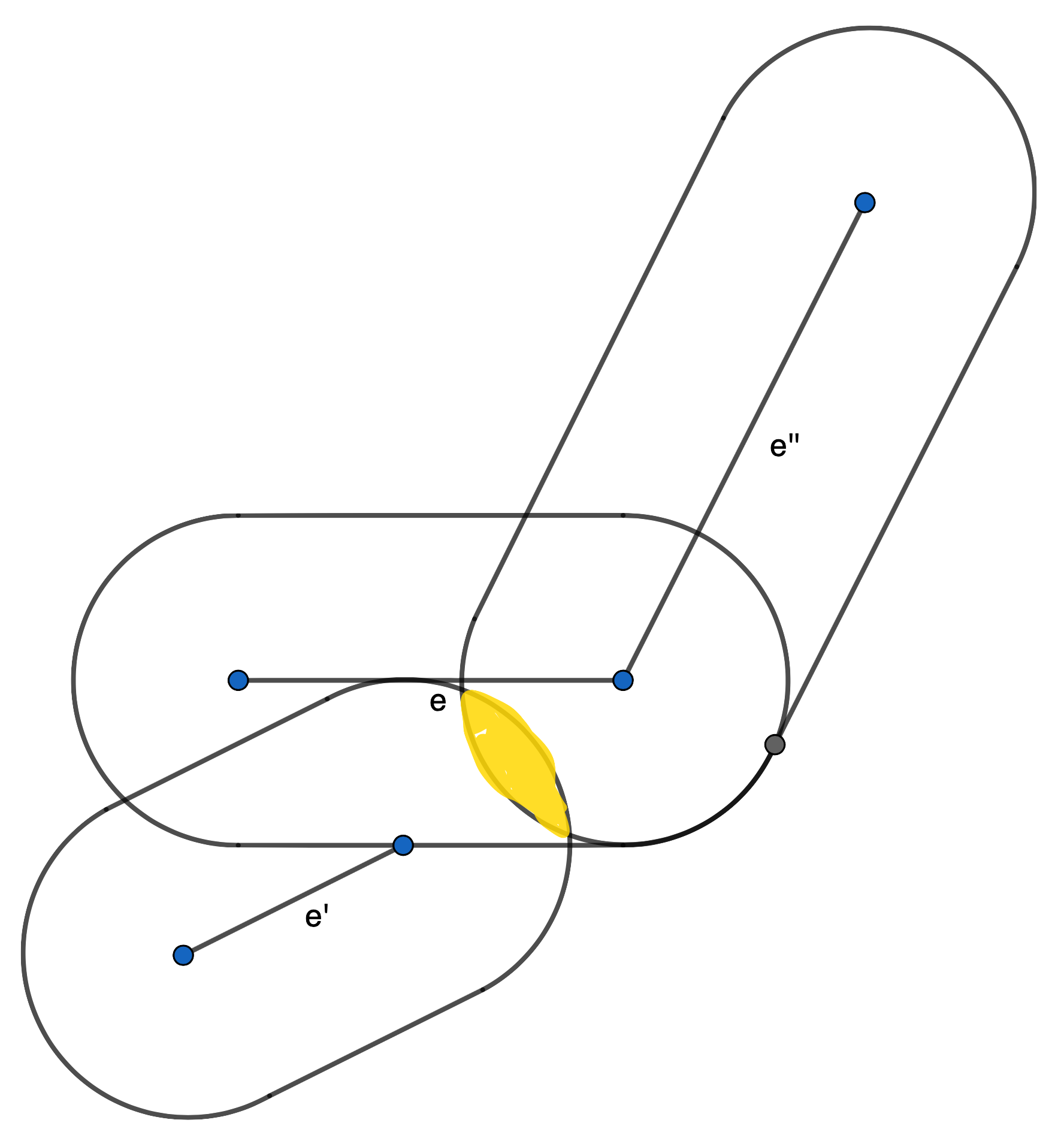}
\end{tabular}
\end{center}
\caption{Shape of extended edges (left) and the intersection of three of these \emph{compatible} shapes (right).\label{caps}}
\end{figure}

As already mentioned, the main combinatorial decision of our models is to determine the sets of edges that are allowed to be \emph{touched} by the same device, i.e., $S \subset E$, such that $\displaystyle\bigcap_{e \in S} (e \oplus \B_{R}(0))\neq \emptyset.$ Defining the set $\mathcal{C}$ as
$$
\mathcal{C} = \left\{S \subset E: \bigcap_{e \in S} (e \oplus \B_{R}(0)) \neq \emptyset\right\}
$$
each element $S \in \mathcal{C}$ will be called a \emph{compatible subset} for the device. In general, unless the radius is big enough, not all the subsets of $E$ belong to $\mathcal{C}$. 

In the following result, we describe a polynomial set (in $|E|$) of valid inequalities for our model that  avoid those non-compatible sets in our models.
\begin{lem}
The following inequalities are valid for the $1$-MNLCLP:
\begin{equation}\label{helly}
\dsum_{e\in S} z_{e} \leq d, \forall S\subset E \text{ with $|S|=d+1$ and } \bigcap_{e \in S} (e \oplus \B_{R}(0)) = \emptyset
\end{equation}
\end{lem}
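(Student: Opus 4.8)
The plan is to prove validity by contradiction, using the geometric characterization \eqref{cov} of the previous lemma as the only real ingredient. Before that, I would record the one structural fact that makes the whole thing work: each extended edge $e\oplus\B_R(0)$ is a convex body, being the Minkowski sum of the segment $e$ and the ball $\B_R(0)$, both convex. This convexity is what later licenses the use of Helly's theorem and explains the appearance of the dimension-dependent cardinality $d+1$.

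Fix $S\subset E$ with $|S|=d+1$ and $\bigcap_{e\in S}(e\oplus\B_R(0))=\emptyset$, and let $(\bar z,\bar X,\dots)$ be an arbitrary feasible solution of the $1$-MNLCLP. Since each $\bar z_e\in\{0,1\}$ and $|S|=d+1$, the claimed inequality $\sum_{e\in S}\bar z_e\le d$ can fail only if $\bar z_e=1$ for every $e\in S$. I would exclude this case directly: if $\bar z_e=1$ for all $e\in S$, then $S\subseteq C:=\{e\in E:\bar z_e=1\}$, so $\bigcap_{e\in C}(e\oplus\B_R(0))\subseteq\bigcap_{e\in S}(e\oplus\B_R(0))$. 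By \eqref{cov} the location $\bar X$ belongs to the left-hand intersection, hence to the right-hand one; but the latter is empty by hypothesis, a contradiction. Therefore $\sum_{e\in S}\bar z_e\le d$, which is exactly \eqref{helly}.

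The role of Helly's theorem is to justify the cardinality $d+1$ and the polynomial size of the family. The identical argument shows that $\sum_{e\in S}z_e\le|S|-1$ is valid for \emph{every} incompatible $S\subseteq E$; the point is that one need not add all of these. Helly's theorem in $\R^d$ --- available precisely because the sets $e\oplus\B_R(0)$ are convex --- ensures that any incompatible family of size larger than $d+1$ already contains an incompatible subfamily of size $d+1$, and the cut \eqref{helly} for that subfamily forces one of its edges to zero, thereby implying the cut for the larger family. Hence indexing only over $(d+1)$-subsets, of which there are at most $\binom{|E|}{d+1}$ (polynomial in $|E|$ for fixed $d$), already dominates every incompatibility cut for subsets of size at least $d+1$.

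I do not expect a genuine obstacle here: validity is one line from \eqref{cov}, and the convexity check is routine. The only subtlety worth stating explicitly is that Helly's theorem is used not for validity but for the reduction to $(d+1)$-subsets; I would therefore be careful to separate the two roles --- the contradiction argument proves that the inequalities hold, while Helly explains why this particular polynomial family is the natural one to include.
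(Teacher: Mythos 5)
Your proof is correct and follows essentially the same route as the paper's: the contradiction argument via the characterization \eqref{cov} (equivalently, Constraints \eqref{m1:1}) gives the general inequality $\sum_{e\in S} z_e \le |S|-1$ for every incompatible $S$, and Helly's theorem --- applicable because each $e\oplus\B_R(0)$ is convex --- justifies restricting attention to $(d+1)$-subsets. Your explicit separation of the two roles (the contradiction proves validity; Helly only explains why the polynomial family of $(d+1)$-cuts dominates all larger incompatibility cuts) is a cleaner presentation of exactly what the paper's compressed proof does.
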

\begin{proof}
It is straightforward to see, by Constraints \eqref{m1:1}, that the following condition is verified by any solution of \eqref{m1:0}-\eqref{m1:7}:
$$
\dsum_{e\in S} z_{e} \leq |S|-1, \forall S\subset E: \bigcap_{e \in S} (e \oplus \B_{R}(0)) = \emptyset, 
$$
Thus, by Helly's theorem~\citep{Helly:1923}, since the sets taking part of the intersections above, $(e \oplus \B_{R}(0))$, are compact and convex for any $e\in E$, the result follows.
\end{proof}

\begin{cor}
Let $\bar z \in \{0,1\}^{|E|}$ be a solution of the system of Diophantine inequations \eqref{helly}. Then, $\bar z$ is a feasible solution for  the $1$-MNLCLP. 
\end{cor}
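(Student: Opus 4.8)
The plan is to verify that any $\bar z$ solving \eqref{helly} is the $z$-component of a genuinely feasible point of \eqref{m1:0}--\eqref{m1:7}. Write $C=\{e\in E:\bar z_e=1\}$ for the cluster of covered edges. The only constraints linking $\bar z$ to the continuous variables are \eqref{m1:1}--\eqref{m1:2}: for each $e\in C$ they demand a point $Y_e^s=\lambda_e^s o_e+(1-\lambda_e^s)f_e$ on the segment $[o_e,f_e]$ with $\|X-Y_e^s\|\le R$. Exactly as in the first Lemma, such points exist for a common center $X$ precisely when $X\in e\oplus\B_R(0)$ for every $e\in C$. Hence the whole task reduces to proving that the cluster is \emph{compatible}, i.e. that $\bigcap_{e\in C}(e\oplus\B_R(0))\neq\emptyset$, after which realizing the remaining variables is routine.

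The crux is to read \eqref{helly} as the Helly hypothesis for the family $\{e\oplus\B_R(0):e\in C\}$, reversing the use of Helly's theorem made in the preceding Lemma. Let $S\subseteq C$ with $|S|=d+1$. Then $\sum_{e\in S}\bar z_e=d+1$, whereas if $\bigcap_{e\in S}(e\oplus\B_R(0))=\emptyset$ held, the inequality \eqref{helly} indexed by $S$ would force $\sum_{e\in S}\bar z_e\le d$, a contradiction. Hence $\bigcap_{e\in S}(e\oplus\B_R(0))\neq\emptyset$ for every such $S$, i.e. every $(d+1)$-fold subfamily of $\{e\oplus\B_R(0):e\in C\}$ meets. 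Since each $e\oplus\B_R(0)$ is compact and convex (a Minkowski sum of a segment and a norm ball), Helly's theorem yields $\bigcap_{e\in C}(e\oplus\B_R(0))\neq\emptyset$.

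To complete the solution, fix any $X$ in this intersection. For $e\in C$ the set $[o_e,f_e]\cap\B_R(X)$ is a nonempty closed subsegment; take $Y_e^0,Y_e^1$ to be its endpoints, ordered so that $\lambda_e^0\le\lambda_e^1$ in $[0,1]$. These satisfy \eqref{m1:1}--\eqref{m1:3} and \eqref{m1:5}, and $\lambda_e^s\le 1=\bar z_e$ gives \eqref{m1:4}. For $e\notin C$ set $\lambda_e^0=\lambda_e^1=0$ and $Y_e^s=f_e$, so \eqref{m1:4} holds with $\bar z_e=0$ while the big-$\Delta$ form of \eqref{m1:1} is slack. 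All of \eqref{m1:1}--\eqref{m1:7} then hold, so $\bar z$ is feasible.

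The main obstacle is the middle step: identifying the polynomial family \eqref{helly} as exactly the certificate that the \emph{entire} cluster is compatible, and discharging the hypotheses of Helly's theorem. The one delicate regime is $|C|\le d$, where $C$ has no $(d+1)$-subset and Helly's theorem is silent; there the argument must lean on the lower-order compatibility inequalities $\sum_{e\in S}\bar z_e\le|S|-1$ for incompatible $S$ (themselves valid, as in the proof of the preceding Lemma), which rule out an incompatible $C$ whenever $\bigcap_{e\in C}(e\oplus\B_R(0))=\emptyset$. I would make this reduction explicit to cover small clusters cleanly.
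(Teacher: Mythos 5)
Your main argument is exactly the intended one: the paper states this corollary without an explicit proof, as the converse reading of the preceding lemma, and your treatment of the case $|C|\ge d+1$ --- every $(d+1)$-subset of the support must have nonempty intersection, else the corresponding inequality in \eqref{helly} would be violated, hence Helly's theorem applied to the compact convex sets $\{e\oplus\B_R(0):e\in C\}$ gives a common point --- together with the routine construction of $X$, $Y_e^s$, $\lambda_e^s$ and the slack big-$\Delta$ constraints for untouched edges, is a complete and correct account of that case.

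However, your handling of the regime $|C|\le d$ does not close the gap you correctly identified. You propose to ``lean on'' the lower-order inequalities $\sum_{e\in S}\bar z_e\le |S|-1$ for incompatible $S$, on the grounds that they are ``themselves valid.'' Validity is the wrong notion here: valid means every feasible solution of \eqref{m1:0}--\eqref{m1:7} satisfies them, but the hypothesis of the corollary is only that $\bar z$ solves the system \eqref{helly}, and those lower-order inequalities are \emph{not} consequences of that system. Concretely, take $d=2$ and three pairwise far-apart segments $e_1,e_2,e_3$, so that all pairwise (and hence triple-wise) intersections of the sets $e_i\oplus\B_R(0)$ are empty, and set $\bar z_{e_1}=\bar z_{e_2}=1$, $\bar z_{e_3}=0$. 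The only constraint in \eqref{helly} reads $z_{e_1}+z_{e_2}+z_{e_3}\le 2$, which holds, yet the support $\{e_1,e_2\}$ is incompatible and $\bar z$ is not the $z$-part of any feasible point of the $1$-MNLCLP. So the corollary, read literally with only $|S|=d+1$, is false; it becomes true precisely when the inequalities are imposed for all incompatible $S$ with $|S|\le d+1$ --- which is what the paper implicitly does in its planar specialization, where both the pairwise and the triple-wise constraints are listed. Your proof is repaired by replacing the appeal to validity with this stronger reading of the hypothesis: for $|C|\le d$, the set $C$ itself (if incompatible) indexes one of the assumed inequalities, which then contradicts $\sum_{e\in C}\bar z_e=|C|$.
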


In the classical Maximal Coverage Location Problems, the above observation allows us to replace the non-linear covering constraints --in the shape of \eqref{m1:1}-- with inequalities in the shape of \eqref{helly}, and the continuous variables which are involved in these constraints can be forgotten (see \cite[e.g.]{blanco2021continuous,blanco2021multitype}). In our model, the latter is no longer possible as in classic MCLP since the $\lambda$-values are also needed to compute the covered volume of the network. 

Thus, we propose the following Integer Linear Programming formulation to obtain valid compatible subsets for the models.

\begin{align}
\max &\dsum_{e\in E} \omega_e L_e z_{e}\label{m_0}\\
\mbox{s.t. } & \dsum_{e\in S} z_{e} \leq d, \forall S\subset E (|S|=d+1): \bigcap_{e \in S} (e \oplus \B_{R}(0)) = \emptyset,\label{m_exp}\\
& z_{e}\in \{0,1\}, \forall e\in E.\label{m_f}
\end{align}

The above mathematical programming model is an edge-based  version of the classical $1$-Maximal Coverage Location Problem, which is known to be NP-hard. The main advantage of this representation is that one can use techniques from Integer Linear Programming to strengthen or solve it, using the available off-the-shelf solvers.

The main  bottleneck of this formulation is the computation of  the intersections of $d+1$ sets in the form $e\oplus\B_R(0)$ which are empty, in whose case the corresponding inequality is added  to the pool
 of constraints. The general methodology that can be applied for any dimension and any $\ell_\tau$-based norm, is by applying a relax-and-cut approach based on solving the problems above by removing Constraints \eqref{m_exp}, separating the violated constraints and incorporating them on-the-fly in an embedded branch-and-cut algorithm. 
 
 In what follows we focus on the planar Euclidean case, which is the most useful case in practice, and for which the formulations can be further simplified.

 Observe that for $d=2$, Constraints \eqref{helly} are equivalent to:
 \begin{align*}
 z_{e} + z_{e^\prime} \leq 1, &\forall e, e^\prime\in E:  (e \oplus \B_{R}(0)) \cap   (e^\prime \oplus \B_{R}(0)) =\emptyset,\\
z_{e} + z_{e^\prime} +  z_{e^{\prime\prime}}\leq 2, &\forall e, e^\prime\in E:  (e \oplus \B_{R}(0)) \cap   (e^\prime \oplus \B_{R}(0)) \cap   (e^{\prime\prime} \oplus \B_{R}(0)) =\emptyset,\\
z_{e}\in \{0,1\}, &\forall e\in E.
\end{align*}
Thus, in order to incorporate these types of constraints one needs to check two and three-wise intersections of objects in the form $e \oplus \B_{R}(0)$. Although these shapes can be difficult to handle in general, the planar Euclidean case can be efficiently handled by analyzing the geometry of these objects as Minkowski sums of segments and disks.
 
 The following results are instrumental for the development of the algorithm that we propose to generate the above sets of constraints. From now on, $\|\cdot\|$ denotes the Euclidean norm in $\R^2$.
 \begin{lem}\label{lema:par}
 	Let $e,e'$ be two segments in $\R^2$ and $\delta(e,e')=\min\{\|X-X'\|: X \in e, X' \in e'\}$. Then, if $\delta(e,e')>0$, there exist $X\in e$ and $X'\in e'$ with $\delta(e,e') = \|X-X'\|$ such that either $X \in \{o_e,f_e\}$ or $X' \in \{o_{e'},f_{e'}\}$.
 \end{lem}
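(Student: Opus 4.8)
The plan is to parameterize both segments and reduce the statement to the first-order optimality conditions of a quadratic minimization over a box, isolating the degenerate (parallel) case as the only delicate point. First I would write $X(s) = o_e + s(f_e - o_e)$ and $X'(t) = o_{e'} + t(f_{e'} - o_{e'})$ for $s,t \in [0,1]$, and set $g(s,t) = \|X(s) - X'(t)\|^2$. Since $g$ is continuous (indeed quadratic) on the compact square $[0,1]^2$, its minimum is attained at some $(s^*,t^*)$, every minimizer of $g$ is a minimizer of $\|X-X'\|$ over $e \times e'$, and the minimum value equals $\delta(e,e')^2 > 0$; in particular the vector $w := X(s^*) - X'(t^*)$ is nonzero.

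Next I would argue by cases on whether this minimizer lies on the boundary of the square. If $s^* \in \{0,1\}$ then $X(s^*) \in \{o_e, f_e\}$, and if $t^* \in \{0,1\}$ then $X'(t^*) \in \{o_{e'}, f_{e'}\}$; in either situation the conclusion already holds for the pair $(X(s^*), X'(t^*))$. The only remaining possibility is an interior minimizer $(s^*, t^*) \in (0,1)^2$, where both partial derivatives vanish, giving $\partial_s g = 2\, w \cdot (f_e - o_e) = 0$ and $\partial_t g = -2\, w \cdot (f_{e'} - o_{e'}) = 0$. Thus the nonzero vector $w$ is orthogonal to both direction vectors $f_e - o_e$ and $f_{e'} - o_{e'}$; since in $\R^2$ the orthogonal complement of a nonzero vector is a line, these two directions must be parallel, i.e. $e$ and $e'$ are parallel. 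Equivalently: if $e$ and $e'$ are not parallel, the minimum cannot be interior, so it is attained at an endpoint of one segment and we are done.

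The main obstacle, and the only genuinely nontrivial case, is precisely this parallel case, in which interior minimizers can truly occur; note that the statement does not assert that \emph{every} minimizing pair involves an endpoint, only that \emph{one} such pair exists, so here I must exhibit it by hand. To finish, I would project $e$ and $e'$ orthogonally onto their common direction line and consider the overlap $I$ of the two projected intervals. If $I = \emptyset$, then because the segments are parallel and disjoint the closest pair consists of endpoints of both segments, and we are done. If $I \neq \emptyset$, then the perpendicular distance between the two supporting lines equals $\delta(e,e')$ and is realized for every point of $I$ (one can go straight across), while each boundary point of $I$ is the projection of an endpoint $v$ of $e$ or of $e'$. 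Taking such a $v$ together with its perpendicular foot on the other segment — which lies on that segment precisely because its projection belongs to the closed overlap $I$ — yields a minimizing pair realizing $\delta(e,e')$ and involving an endpoint, which completes the argument.
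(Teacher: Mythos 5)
Your proof is correct, and it supplies a genuine argument where the paper essentially gives none: the paper's proof of this lemma is a single sentence asserting, as an observation, that the minimum distance between two segments is always achieved by choosing an extreme of one of them. Your route makes this precise. By minimizing the quadratic $g(s,t)=\|X(s)-X'(t)\|^2$ over the compact box $[0,1]^2$, you reduce the claim to first-order optimality: any interior minimizer forces the nonzero vector $w=X(s^*)-X'(t^*)$ to be orthogonal to both direction vectors, which in $\R^2$ forces the two segments to be parallel; the parallel case you then settle explicitly by projecting onto the common direction and exhibiting an endpoint-involving minimizing pair, both when the projected intervals overlap and when they do not. This buys two things the paper's one-liner obscures: it identifies exactly when a minimizing pair can fail to involve an endpoint (parallel segments with overlapping projections, where the lemma's existential phrasing is what saves the statement), and it pinpoints where planarity is used --- in $\R^3$ the same first-order conditions are consistent with skew segments whose unique closest pair is interior--interior, so the lemma is genuinely two-dimensional, a fact invisible in the paper's treatment. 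The only trivial caveat is that your interior-point case implicitly assumes both segments are nondegenerate; if, say, $o_e=f_e$, the conclusion holds vacuously since every point of $e$ is an endpoint.
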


\begin{proof}
The result follows by observing that the minimum distance between two segments is always achieved by choosing one of the extremes of the segments.
\end{proof}

\begin{lem}\label{lema:punto-seg}
	Let $e$ be a segment in $\R^2$, and $Q\in \R^2$. Then, $\delta(e,Q) := \min \{\|Q-X\|: X \in e\}$ can be computed as:
	$$
	\delta(e,Q) = \|Q -  (\min\{\max\{0,\mu\}, 1\} (f_e-o_e) + o_e)\|.
	$$
\end{lem}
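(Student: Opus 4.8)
The plan is to find the point on the segment $e$ that minimizes the distance to $Q$ by exploiting the convexity of the squared-distance function along the parameterized segment. First I would parameterize the segment as $X(\mu) = o_e + \mu(f_e - o_e)$, but crucially for \emph{all} $\mu \in \R$, so that $X(\mu)$ traces out the entire line containing $e$; the actual segment corresponds to $\mu \in [0,1]$. The squared distance $g(\mu) = \|Q - X(\mu)\|^2$ is a convex quadratic in $\mu$, and its unconstrained minimizer over the whole line is the orthogonal projection of $Q$ onto the line.

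Next I would compute that unconstrained minimizer explicitly. Setting $g'(\mu)=0$ and solving gives
$$
\mu = \frac{\langle Q - o_e, f_e - o_e\rangle}{\|f_e - o_e\|^2},
$$
which is the value the statement implicitly calls $\mu$. This $\mu$ is the foot of the perpendicular from $Q$ to the supporting line of $e$.

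The key step is then to project this unconstrained minimizer back onto the feasible interval $[0,1]$. Because $g$ is a convex function of a single variable, its minimizer over the closed interval $[0,1]$ is obtained by clamping the free minimizer $\mu$ to the interval: if $\mu < 0$ the constrained minimum is at the endpoint $0$ (i.e.\ $o_e$), if $\mu > 1$ it is at the endpoint $1$ (i.e.\ $f_e$), and otherwise it is at the interior point $X(\mu)$. This clamping is exactly the operation $\min\{\max\{0,\mu\},1\}$, and substituting the clamped parameter into $X(\mu)$ yields the claimed formula
$$
\delta(e,Q) = \|Q - (\min\{\max\{0,\mu\},1\}(f_e - o_e) + o_e)\|.
$$

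The only subtle point, which I would state as the main (minor) obstacle, is justifying that clamping the unconstrained minimizer is valid: this rests on the fact that a strictly convex univariate function restricted to an interval attains its minimum either at the interior stationary point when that point is feasible, or at the nearer endpoint when the stationary point lies outside the interval (since $g$ is monotone on each side of its vertex). A degenerate case $o_e = f_e$ would make $\|f_e-o_e\|^2=0$ and $\mu$ undefined, but this is excluded since $e$ is a genuine segment. With that monotonicity observation in hand the result is immediate, so the proof is short and the substance lies entirely in recognizing the projection-then-clamp structure.
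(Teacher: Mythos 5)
Your proof is correct and follows essentially the same route as the paper: identify the orthogonal projection of $Q$ onto the supporting line of $e$, parameterize it by $\mu$, and show that clamping $\mu$ to $[0,1]$ gives the constrained minimizer. The paper justifies the clamping step by expanding $\|Q-X\|^2 = \|Q-S\|^2 + (\lambda-\mu)^2\|f_e-o_e\|^2$ case by case ($\mu<0$, $\mu\in[0,1]$, $\mu>1$), which is just an explicit instance of the convexity/monotonicity argument you invoke.
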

\begin{proof}
    The constructive proof is detailed in \ref{appendix}.
\end{proof}

Given a set of edges, $E$, and a radius $R$, using the above results, we develop  algorithms to compute the two and three-wise intersections of sets in the form $e \oplus \B_{R}(0)$, for $e\in E$. The pseudocodes are shown in Algorithms \ref{alg:1} and \ref{alg:2}. The set $M$, which is initialized to the empty set, will contain, the pairs $(e,e')$ of $E\times E$ with $(e\oplus \B_R(0))\cap (e'\oplus \B_R(0))=\emptyset$ by checking the distance between the segments, $\delta(e,e')$. On the one hand, in case, $\delta(e,e')=0$, both segments intersect so also their Minkowski sums. On the other hand, if $\delta(e,e')\neq 0$, we denote by $r_e$ and $r_{e'}$ the lines containing the segments $e$ and $e'$, respectively, and by $Q_0$ their intersection point. By Lemma \ref{lema:par} there exist $X, X'\in \R^2$ with $\delta(e,e') = \|X-X'\|$, being $X \in \{o_e, f_e\}$ or $X' \in \{o_{e'}, f_{e'}\}$. Thus, four distances are enough to compute $\delta(e,e')$, namely $\delta_1 = \delta(o_{e'}, e)$, $\delta_2 = \delta(f_{e'}, e)$, $\delta_3 = \delta(o_{e}, e')$ and $\delta_4 = \delta(f_{e}, e')$, being $\delta(e,e') = \min \{\delta_1, \delta_2, \delta_3, \delta_4\}$. In case $\delta(e,e') > 2R$, then $(e\oplus \B_R(0))\cap (e'\oplus \B_R(0)) = \emptyset$, and the tuple $(e,e')$ is added to $M$.

For the three-wise intersections, the set $M$ is again initialized to the empty set. Then, for each triplet $(e_1,e_2,e_3)$ whose pairwise intersection is non-empty (by Algorithm \ref{alg:1}), we solve the following mathematical optimization problem:
\begin{align}
\varepsilon^*(e_1, e_2, e_3) := \min & \ \varepsilon \label{eq0:alg2}\\
\mbox{s.t. } &Y_i=(1-\lambda_i)o_{e_i} + \lambda_i f_{e_i}, i=1,2,3, \\
& \| X-Y_i\|\leq R +\varepsilon, i=1, 2, 3,\\
& X \in \R^2,\\
& \lambda_1, \lambda_2, \lambda_3 \in [0,1],\\
& \varepsilon \in \R.\label{eqf:alg2}
\end{align}

The above problem is polynomial-time solvable since it can be rewritten as a continuous Second-Order Cone Optimization problem. Furthermore, its objective value provides a way to check for the emptiness of the three-wise intersection problem.
\begin{lem}\label{3wise}
Let $e_1, e_2, e_3 \in E$ with $(e_i\oplus \B_R(0)) \cap (e_j\oplus \B_R(0)) \neq \emptyset$ for all $i, j \in \{1,2,3\}$. Then, $(e_1\oplus \B_R(0)) \cap (e_2\oplus \B_R(0)) \cap (e_3\oplus \B_R(0)) \neq \emptyset$ if and only if $\varepsilon^*(e_1,e_2,e_3)=0$.
\end{lem}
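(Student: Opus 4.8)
The plan is to reduce the nonemptiness of the triple Minkowski intersection to the feasibility, at $\varepsilon=0$, of the constraint system \eqref{eq0:alg2}--\eqref{eqf:alg2}. First I would record the elementary membership fact: for $X\in\R^2$ and an edge $e_i$, one has $X\in e_i\oplus\B_R(0)$ if and only if there is a point $Y_i$ on the segment $e_i$ with $\|X-Y_i\|\leq R$, i.e. if and only if $\delta(e_i,X)\leq R$ with $\delta$ the point-to-segment distance of Lemma \ref{lema:punto-seg}. This is immediate from the definition of the Minkowski sum, since $X\in e_i\oplus\B_R(0)$ means $X=Y_i+b$ with $Y_i\in e_i$ and $\|b\|\leq R$; parameterizing $Y_i=(1-\lambda_i)o_{e_i}+\lambda_i f_{e_i}$ with $\lambda_i\in[0,1]$ recovers exactly the first two families of constraints of the auxiliary problem. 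Consequently a point $X$ lies in $(e_1\oplus\B_R(0))\cap(e_2\oplus\B_R(0))\cap(e_3\oplus\B_R(0))$ precisely when $(X,Y_1,Y_2,Y_3,\lambda_1,\lambda_2,\lambda_3)$ together with $\varepsilon=0$ is feasible for \eqref{eq0:alg2}--\eqref{eqf:alg2}.

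With this dictionary the two implications are short. For the ``if'' direction, if $\varepsilon^*=0$ then an optimal solution furnishes $X$ and $Y_i\in e_i$ with $\|X-Y_i\|\leq R$, so $X$ belongs to each extended edge and the intersection is nonempty. For the ``only if'' direction, if some $X^\star$ lies in the triple intersection, I would take $Y_i^\star$ to be the nearest point of $e_i$ to $X^\star$, given in closed form by Lemma \ref{lema:punto-seg}; then $\|X^\star-Y_i^\star\|=\delta(e_i,X^\star)\leq R$, so $(X^\star,Y_i^\star,\varepsilon=0)$ is feasible and hence $\varepsilon^*\leq 0$.

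To control $\varepsilon^*$ exactly I would eliminate the $Y_i$: for fixed $X$ the smallest admissible slack is $\max_i\delta(e_i,X)-R$, so that $\varepsilon^*=\min_{X\in\R^2}\max_i\delta(e_i,X)-R$, a min--max of convex point-to-segment distances. Hence the triple intersection is nonempty if and only if $\min_X\max_i\delta(e_i,X)\leq R$, i.e. if and only if $\varepsilon^*\leq 0$, and it is empty if and only if $\varepsilon^*>0$; the value $\varepsilon^*=0$ marks the tangency case in which the three extended edges meet without sharing an interior point. The main point requiring care is precisely this sign bookkeeping: as written ($\varepsilon\in\R$) the ``only if'' direction delivers $\varepsilon^*\leq 0$ rather than $\varepsilon^*=0$, since an intersection with nonempty interior gives $\varepsilon^*<0$. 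The stated equivalence with ``$\varepsilon^*=0$'' is recovered once the slack is read as $\max\{0,\min_X\max_i\delta(e_i,X)-R\}$ (equivalently, $\varepsilon$ is restricted to be nonnegative), which is harmless for the separation routine, whose only use of the lemma is to test the sign of $\varepsilon^*$ and decide whether to add the valid inequality \eqref{helly}. Note finally that the pairwise hypothesis is not needed for the argument itself; it is contextual, ensuring that the triple inequality is not already implied by a pairwise one.
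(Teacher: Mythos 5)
Your proposal is correct, and it actually does more than the paper does: the paper states Lemma \ref{3wise} with no proof at all, so there is no argument to compare step by step, and yours is the natural (and surely intended) one --- the dictionary $X\in e_i\oplus\B_R(0)$ if and only if $\delta(e_i,X)\le R$, which collapses the auxiliary program \eqref{eq0:alg2}--\eqref{eqf:alg2} to $\varepsilon^*(e_1,e_2,e_3)=\min_{X\in\R^2}\max_{i}\delta(e_i,X)-R$. Moreover, your sign caveat is a genuine catch, not pedantry: with $\varepsilon$ free as printed, the lemma is false in one direction, since three segments passing through a common point satisfy the pairwise hypothesis, have nonempty triple intersection, and give $\varepsilon^*=-R<0$, not $\varepsilon^*=0$. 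The correct equivalence is ``nonempty if and only if $\varepsilon^*\le 0$'' (equivalently, ``empty if and only if $\varepsilon^*>0$''), or else one must restrict $\varepsilon\ge 0$ as you suggest; and, as you also observe, Algorithm \ref{alg:2} only ever tests whether $\varepsilon^*>0$, so the separation of the inequalities \eqref{helly} is unaffected by the correction. One half-sentence you should add if writing this up: your ``if'' direction takes an \emph{optimal} solution, i.e.\ it assumes the minimum is attained; this holds because $X\mapsto\max_i\delta(e_i,X)$ is continuous and coercive (or because, after your elimination of the $Y_i$ via Lemma \ref{lema:punto-seg}, the search for $X$ can be confined to a compact neighborhood of $e_1$), but it is not automatic from the problem statement alone.
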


% In this problem, the goal is to find an intersection point, $X$, in $(e_1\oplus \B_R(0))\cap (e_2\oplus \B_R(0)) \cap (e_3\oplus \B_R(0))$. If such a point exists, then, is because there exist points at each of the segments (parameterized by the $\lambda$-variables above, at a distance not exceeding the radius $R$, being the minimum of the problem above $\varepsilon^*=0$. Otherwise, $\varepsilon^*>0$. The problem above is solvable in polynomial time since it can be rewritten as a Second Order-Cone Optimization problem.

 \begin{algorithm}[h!]
 \KwData{Set of edges, $E$, and radius $R$.}

 %$M:=\left\{ (e,e')\in E\times E : {\rm d}(e,e')>2R\right\}$.
 
 $M=\emptyset$
 
 \For{$(e,e') \in E \times E$}{
    Set: $\bar{e}=f_e-o_e$.\\
    Set: $\bar{e'}=f_{e'}-o_{e'}$.\\
 	Compute the intersection point of  the lines $o_e + \langle \bar{e} \rangle$ and $o_{e'} + \langle \bar{e'} \rangle$: $Q_0$.\\
 	Calculate $\mu_0$, $\mu_0'$ such that $Q_0=\mu_0\bar{e}+o_e$ and $Q_0=\mu_0'\bar{e'}+o_{e'}$.\\
 	\If{$\mu_0$ or $\mu_0'\notin [0,1]$}{
 		\begin{enumerate}
 			\item Compute the intersection point of  the lines $o_e + \langle \bar{e} \rangle$ and $o_{e'} + \langle \bar{e}^\bot \rangle$: $Q_1$.\\
 			Calculate $\mu_1$ such that $Q_1 = \mu_1 \bar{e} + o_e$.\\
 			Set: $\delta_1 = \|o_{e'} -  (\min\{\max\{0,\mu_1\}, 1\} \bar{e} + o_e)\|$.
 			\item Compute the intersection point of  the lines $o_e + \langle \bar{e} \rangle$ and $f_{e'} + \langle \bar{e}^\bot \rangle$: $Q_2$.\\
  			Calculate $\mu_2$ such that $Q_2 = \mu_2 \bar{e} + o_e$.\\
  			Set: $\delta_2 = \|f_{e'} -  (\min\{\max\{0,\mu_2\}, 1\} \bar{e} +  o_e)\|$.
   			\item Compute the intersection point of  the lines $o_e + \langle \bar{e'}^\bot \rangle$ and $o_{e'} + \langle \bar{e'} \rangle$: $Q_3$.\\
  			Calculate $\mu_3$ such that $Q_3= \mu_3 \bar{e'} + o_{e'}$.\\
  			Set: $\delta_3 = \|o_e -  (\min\{\max\{0,\mu_3\}, 1\} \bar{e'} +  o_{e'})\|$.
   			\item Compute the intersection point of  the lines $f_{e} + \langle \bar{e'}^{\bot} \rangle$ and $o_{e'} + \langle \bar{e'} \rangle$: $Q_4$.\\
  			Calculate $\mu_4$ such that $Q_4= \mu_4 \bar{e'} + o_{e'}$.\\
  			Set: $\delta_4 = \|f_e -  (\min\{\max\{0,\mu_4\}, 1\} \bar{e'} +  o_{e'})\|$.
  		\end{enumerate}
  		\If{$\min\{\delta_1, \delta_2, \delta_3, \delta_4\} > 2R$}{
  		Add $(e,e')$ to $M$.
    	}
	}
 }

 \KwResult{$M = \left\{ (e,e')\in E\times E : (e \oplus \B_{R}(0)) \cap   (e^\prime \oplus \B_{R}(0)) = \emptyset \right\}$.}
 
 \caption{A complete set of $2$-wise incompatible edges.\label{alg:1}}
\end{algorithm}
\begin{algorithm}[h!]
 \KwData{Set of edges, $E$, and radius $R$.}

 $L=\{(e_1,e_2,e_3) \in E\times E\times E: (e_1 \oplus \B_{R}(0)) \cap   (e_2 \oplus \B_{R}(0))\neq\emptyset, (e \oplus \B_{R}(0)) \cap   (e_3 \oplus \B_{R}(0))\neq\emptyset, (e_2 \oplus \B_{R}(0)) \cap   (e_3 \oplus \B_{R}(0))\neq\emptyset\}$.
 
 $M_3=\emptyset$.
 
 \For{$(e_1,e_2,e_3) \in L$}{
 	Compute $\varepsilon^*(e_1,e_2,e_3)$. 

		  \If{$\varepsilon^*(e_1,e_2,e_3) > 0$}{
  Add $(e_1, e_2, e_3)$ to $M_3$.
  }
  
 }

 \KwResult{$M = \left\{ (e_1,e_2,e_3)\in E\times E\times E : (e_1 \oplus \B_{R}(0)) \cap   (e_2 \oplus \B_{R}(0))\cap (e_3 \oplus \B_{R}(0)) = \emptyset \right\}$.}
 
 \caption{A complete set of $3$-wise incompatible edges (which are pair-wise compatible).\label{alg:2}}
\end{algorithm}

 \section{A general model for (PSNLCLP) and (MNLCLP)}\label{sec:4}
 
 In this section, we provide a general methodology to deal with the optimal location of devices in both the PSNLCLP and the MNLCLP. In the single-device problem analyzed in the previous section, the coverage of an edge can be directly computed by parameterizing the intersection of the boundary of the ball with the edge. Nevertheless, in the multi-device problem, the covered length does not coincide with the sum of the coverages of every single device separately, since the same part of a segment may be covered by two or more devices, but the covered length must be accounted for only once (otherwise the optimal placement for a set of devices is the collocation of all of them in the more weighted edge). 
 
 We illustrate the situation in the following toy example.
 \begin{example}
 Let us consider a planar network with a single edge $e$ and four devices with Euclidean ball coverage areas as drawn in Figure \ref{fig:sev}. The four devices \emph{touch} the edge. The covered length of the edge is highlighted with thicker segments in the picture. Clearly, this length cannot be computed by adding up separately each of the covered lengths of the devices.
 
\begin{figure}[h!]
\centering\begin{tikzpicture}[scale=0.5]
\tikzstyle{arrow} = [thick,scale=3,->,>=stealth]
\coordinate(O) at (2,2);
\coordinate(D) at (20,8);

\coordinate(X1) at (7,3);
\coordinate(X2) at (6,5);
\coordinate(X3) at (16,7);
\coordinate(X4) at (10,4);

\node[circle,draw,inner sep=1.5pt](D-1) at (D) {\tiny $f_e$};
\node[circle,draw,inner sep=1.5pt](O-1) at (O) {\tiny $o_e$};

\coordinate(Y11) at (5,3);
\coordinate(Y12) at (5.3381,3.1127);
\coordinate(Y21) at (7.6618,3.8872);
\coordinate(Y22) at (8, 4);
\coordinate(Y31) at (8.6, 4.2);
\coordinate(Y32) at (11.6, 5.2);
\coordinate(Y41) at (14.2265, 6.0755);
\coordinate(Y42) at (17.9734,7.3244);

\draw[color=red!60, fill=red!5, very thick, fill opacity=0.5](X1) circle (2);
\draw[color=blue!60, fill=blue!5, very thick, fill opacity=0.5](X2) circle (2);
\draw[color=green!60, fill=green!5, very thick, fill opacity=0.5](X3) circle (2);
\draw[color=yellow!60, fill=yellow!5, very thick, fill opacity=0.5](X4) circle (2);
	
\node[circle,draw,fill,inner sep=1pt, red](Y11-1) at (Y11) {};
\node[circle,draw,fill,inner sep=1pt, blue](Y12-1) at (Y12) {};
\node[circle,draw,fill,inner sep=1pt, blue](Y21-1) at (Y21) {};
\node[circle,draw,fill,inner sep=1pt, yellow](Y22-1) at (Y22) {};
\node[circle,draw,fill,inner sep=1pt, red ](Y31-1) at (Y31) {};
\node[circle,draw,fill,inner sep=1pt, yellow](Y32-1) at (Y32) {};
\node[circle,draw,fill,inner sep=1pt, green](Y41-1) at (Y41) {};
\node[circle,draw,fill,inner sep=1pt, green](Y42-1) at (Y42) {};
\draw (O-1) --(D-1);
\draw[very thick]  (Y11)--(Y32);
\draw[very thick]  (Y41)--(Y42);
			 
\end{tikzpicture}
\caption{Example of interaction between the coverages of different devices.\label{fig:sev}}
\end{figure}
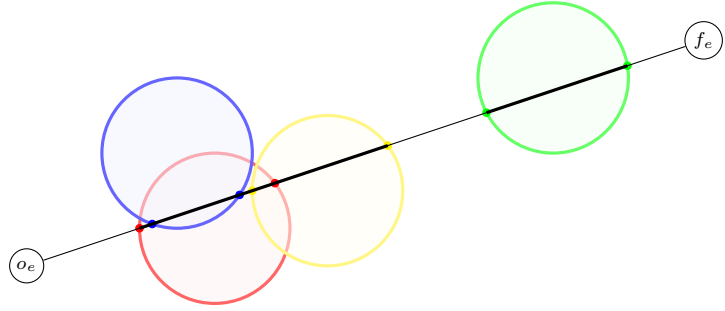
 \end{example}

In what follows we derive a mathematical programming model that overcomes this situation.

Observe that the positions of the intersection points of the coverage areas of $p$ devices with an edge (segment) $e$ provide a partition of the edge in at most $2p+1$ subsegments of $e$. Each of those subsegments is either fully covered or non-covered by the device. Let $\lambda^0_{1e},  \lambda^1_{1e}, \ldots, \lambda^0_{pe},  \lambda^1_{pe}$ the parameterizations of the intersection points of the $p$ devices with respect to $e$ (here $\lambda_{je}^0$ and $\lambda_{je}^1$ stands for the parameterizations of the intersection of the coverage area of $j$-th device with segment induced by the edge $e$). 

We assume that the devices not intersecting the edge have both lambda values equal to zero. Sorting the $\lambda^0$ and $\lambda^1$ values one gets two sorted sequences in the form:
$$
\Lambda^0_e := \lambda^0_{(1)e} \leq \cdots \leq \lambda^0_{(p)e}
$$
$$
\Lambda^1_e := \lambda^1_{(1)e} \leq \cdots \leq \lambda^1_{(p)e}
$$
Merging both lists one gets all the partitions of the segment $e$ by the different intersection points:
$$
\Lambda_e := \lambda^{i_1}_{(1)e} \leq \cdots \leq \lambda^{i_{2p}}_{(2p)e}
$$
where $i_1, \ldots, i_{2p} \in \{0,1\}$.

For each $l \in \{1, \ldots, 2p\}$, the intervals $[\lambda^{i_l}_{(l)e}, \lambda^{i_{l+1}}_{(l+1)e}]$ along with $[o_e,\lambda^{i_1}_{(1)e}]$ and $[\lambda^{i_{2p}}_{(2p)e}, f_e]$ induce a subdivision of the segment $e$ into $2p+1$ pieces (some of them probably singletons). Furthermore, in case any of the extreme points, $o_e$ and $f_e$, are covered by a device, the corresponding subsegment will be a singleton ($[o_e,o_e]=\{o_e\}$ or $[f_e,f_e]=\{f_e\}$), whose length is zero. Otherwise, in case any extreme subsegment is not covered, still its contribution to the objective function is zero. Thus, in our formulation, it is enough considering the $2p-1$ intermediate subsegments in the subdivision.

Given the sequence $\Lambda_e$ for the $p$ given devices located at $X_1, \ldots, X_p$, one can easily determine which of the subsegments in the partitions are covered by the facilities as stated by the following straightforward observation.

\begin{lem}\label{lemma1}
A subsegment in the form $s=[\lambda^{i_l}_{(l)e}, \lambda^{i_{l+1}}_{(l+1)e}]$ is covered by a set of devices if and only if $s \subseteq [\lambda_{je}^0, \lambda_{je}^1]$ for some $j=1,\dots,p$ with $\lambda_{je}^0 <\lambda_{je}^1$.
\end{lem}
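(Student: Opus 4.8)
The plan is to reduce everything to intervals in the parameter $\lambda$. For a fixed edge $e$, the part of $e$ covered by device $j$ is exactly the parameter interval $[\lambda_{je}^0,\lambda_{je}^1]$: since the coverage area $\B_R(X_j)$ is convex, its intersection with the line supporting $e$ is an interval whose endpoints on the segment are precisely $Y_{je}^0,Y_{je}^1$, so a point of $e$ with parameter $\lambda$ lies in $\B_R(X_j)$ if and only if $\lambda\in[\lambda_{je}^0,\lambda_{je}^1]$. Under this reformulation, ``$s$ is covered'' means that every parameter value in $s=[\lambda^{i_l}_{(l)e},\lambda^{i_{l+1}}_{(l+1)e}]$ belongs to $\bigcup_{j=1}^{p}[\lambda_{je}^0,\lambda_{je}^1]$. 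The reverse implication is then immediate: if $s\subseteq[\lambda_{je}^0,\lambda_{je}^1]$ for some $j$ with $\lambda_{je}^0<\lambda_{je}^1$, every point of $s$ lies in $\B_R(X_j)$, so $s$ is covered.

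For the forward implication I would first record the structural property that makes the subdivision useful. Because $\Lambda_e$ is obtained by merging and sorting all the endpoints $\{\lambda_{je}^0,\lambda_{je}^1\}_{j=1}^{p}$, two consecutive breakpoints $\lambda^{i_l}_{(l)e}$ and $\lambda^{i_{l+1}}_{(l+1)e}$ bound an open interval $\mathrm{int}(s)$ that contains \emph{none} of these endpoints. From this I would deduce the dichotomy: each coverage interval $[\lambda_{je}^0,\lambda_{je}^1]$ either contains all of $s$ or meets $s$ in at most one of the two breakpoints bounding it. Granting the dichotomy, the argument closes quickly: assuming $s$ is nondegenerate, choose any interior point $\lambda^*\in\mathrm{int}(s)$; since $s$ is covered we have $\lambda^*\in[\lambda_{je}^0,\lambda_{je}^1]$ for some $j$, this interval necessarily meets $\mathrm{int}(s)$, hence by the dichotomy it contains all of $s$, and it is nondegenerate because it already contains the nondegenerate set $s$, giving $\lambda_{je}^0<\lambda_{je}^1$.

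The only step requiring care is the dichotomy, and this is exactly where the finest-partition property enters. Suppose $[\lambda_{je}^0,\lambda_{je}^1]$ meets $\mathrm{int}(s)$ at some $\lambda$. If one had $\lambda_{je}^0>\lambda^{i_l}_{(l)e}$, then $\lambda_{je}^0\in(\lambda^{i_l}_{(l)e},\lambda^{i_{l+1}}_{(l+1)e})$ would be a breakpoint strictly inside $\mathrm{int}(s)$, contradicting the merging construction; hence $\lambda_{je}^0\leq\lambda^{i_l}_{(l)e}$, and symmetrically $\lambda_{je}^1\geq\lambda^{i_{l+1}}_{(l+1)e}$, so $s\subseteq[\lambda_{je}^0,\lambda_{je}^1]$. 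I expect this to be the main obstacle, since it is the place where one must argue that no interior breakpoint exists; everything else is bookkeeping. Finally, degenerate (singleton) subsegments need a brief remark: they have zero length and therefore do not contribute to the covered length, so they can be excluded from the statement (consistent with only the $2p-1$ intermediate subsegments being relevant), which is why the qualifier $\lambda_{je}^0<\lambda_{je}^1$ is imposed on the covering device.
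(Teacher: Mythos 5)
Your proof is correct. Note that the paper itself offers no proof of this lemma at all: it is introduced as a ``straightforward observation,'' so there is nothing to compare against line by line. Your argument supplies exactly the formalization that the paper leaves implicit: (i) by convexity of $\B_{R_j}(X_j)$, the part of $e$ covered by device $j$ is precisely the parameter interval $[\lambda_{je}^0,\lambda_{je}^1]$, which makes the reverse implication immediate; and (ii) since $\Lambda_e$ merges \emph{all} endpoints, the open interior of a subsegment contains no breakpoint, which yields your dichotomy and hence the forward implication. Your closing remark is also a genuine (if minor) improvement on the paper's statement: for degenerate subsegments the ``if and only if'' can fail (a tangential contact $\lambda_{je}^0=\lambda_{je}^1$ covers a singleton subsegment without satisfying the strict inequality), so restricting to nondegenerate subsegments --- harmless, as singletons contribute zero length --- is exactly the right way to make the lemma literally true, and it is also where the qualifier $\lambda_{je}^0<\lambda_{je}^1$ earns its keep.
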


With the above observations, we derive mathematical programming formulations for the multi-device versions of PSNLCLP and MNLCLP.

We denote by $P=\{1, \ldots, p\}$ the index set for the devices to locate and by $Q=\{1, \ldots, 2p-1\}$ the index sets for the intermediate  subsegments in the partition induced by the $\Lambda$ sequences. We assume that the $j$-th device is endowed with a $\|\cdot\|$-based coverage area with radius $R_j$.

We use the following decision variables in our models:

$$
z_ {je} = \begin{cases}
1 & \mbox{ if edge $e$ intersect the $j$-th device's coverage area,}\\
0 & \mbox{otherwise}
\end{cases} \forall j\in P, \forall e\in E.
$$
$$
X_{j1}, \ldots, X_{jd}: \text{ Coordinates of the $j$-th device, } \forall j \in P.
$$
\begin{align*}
\lambda_{je}^0, \lambda_{je}^1: & \text{ Parameterization in the segment of the two intersection}\\
& \text{ points of $\partial\B_{R_j}(X_j)$ with segment $e$, } \forall j\in P, \forall e\in E.
\end{align*}
$$
w_{e\ell} =\begin{cases}
1 & \mbox{if the $\ell$-th subsegment of edge $e$ is covered by some device,}\\
0 & \mbox{otherwise}
\end{cases}, \forall \ell\in Q, \forall e \in E.
$$

 $$
\xi_{je\ell}^s=\begin{cases}
1 & \text{if  $\lambda^s_{je}$ is sorted in $\ell$-th position in the list of $\Lambda_e$},\\
0 & \text{otherwise}
\end{cases} \forall j\in P, \forall \ell\in Q\cup \{2p\}, \forall e \in E.
$$

With the above set of variables, the amount:
$$
L_e\left[\dsum_{j\in P}\dsum_{s=0}^1\lambda_{je}^s\xi_{je(\ell+1)}^s-\dsum_{j\in P}\dsum_{s=0}^1\lambda_{je}^s\xi_{je\ell}^s\right]
$$
determines the length of the $\ell$-th subsegment in case it is covered by any of the devices in $P$. Note that in case such a subsegment is $[\lambda_{je}^s, \lambda_{j'e}^{s'}]$, the above expression becomes $L_e (\lambda_{j'e}^{s'}-\lambda_{je}^s)$ which is the desired amount. 

Thus, the overall volume coverage of the network can be computed as:
$$
\dsum_{e\in E}\dsum_{\ell\in Q} \omega_e w_{e\ell} L_e \left[\dsum_{j\in P}\dsum_{s=0}^1\lambda_{je}^s\xi_{je(\ell+1)}^s-\dsum_{j\in P}\dsum_{s=0}^1\lambda_{je}^s\xi_{je\ell}^s\right]
$$

In order to adequately represent the decision variables in our model, the following constraints are considered:

\begin{enumerate}
\item \emph{Coverage Constraints:}
\begin{equation}\label{2}
\|(\lambda_{je}^s e + o_e) - X_j\| z_{je} \leq R_j,   \forall  j\in P,  \ \forall e\in E,  \ s=0,1
\end{equation}
These constraints enforce that in case an edge is accounted as \emph{touched} by the $j$-th device ($z_{je}=1$),  the two intersection points $(\lambda_{je}^0 e + o_e)$ and $(\lambda_{je}^1 e + o_e)$ belong to $\B_{R_j}(X_j) \cap e$. This constraint can be reformulated as:
$$
\|(\lambda_{je}^s e + o_e) - X_j\| z_{je} \leq R_j + \Delta(1-z_{je}),   \forall  j\in P,  \ \forall e\in E,  \ s=0,1
$$
where $\Delta$  a big enough constant with $\Delta> \max \Big\{\| z_1 -z_2\|: z_1, z_2 \in \{o_e, f_e: e\in E\}\Big\}$.
\item \emph{Directed Parameterization:}
\begin{equation}
\lambda_{je}^0\leq \lambda_{je}^1,  \forall  j\in P, \ \forall e\in E. \label{3}
\end{equation}
In case the coverage area of a device $j$ touches the segment $e$, the segment is oriented in the parameterization.
\item \emph{Zero parameterizations for untouched edges}
\begin{equation}
\lambda_{je}^1\leq z_{je},  \forall  j\in P, \ \forall e\in E. \label{4}
\end{equation}
In case the $j$-th device does not touch the segment induced by an edge $e$, the covered length of such an edge by the device will be zero. By \eqref{2}, in that case, the device is not restricted to touching the segment, but to assure that no length is accounted for, we fix both $\lambda$-values in the fictitious intersection to zero.
\item \emph{$\Lambda$-Sorting Constraints:}
\begin{align}
\dsum_{j\in P} (\xi_{je\ell}^0 + \xi_{je\ell}^1) &=1,  \forall e\in E,  \ \forall \ell\in Q\cup \{2p\},\label{5}\\
\dsum_{\ell\in Q\cup \{2p\}} \xi_{je\ell}^s&=1,   \forall j\in P, \ \forall e\in E,  \ s=0,1 \label{6} \\
\dsum_{j\in P}(\lambda_{je}^0 \xi_{je\ell}^0 + \lambda_{je}^1 \xi_{je\ell}^1)&\leq \dsum_{j\in P}(\lambda_{je}^0 \xi_{je(\ell+1)}^0 + \lambda_{je}^1 \xi_{je(\ell+1)}^1), \forall  e\in E,  \ \forall \ell\in Q.\label{7}
\end{align}
These constraints allow us to adequately define the variables $\xi$. Constraints \eqref{5} and \eqref{6} assure that for each $e$ each $\lambda_e$-value is sorted in exactly a single position in $Q$ and that each position is assigned to exactly one $\lambda_e$ value. Constraint \eqref{7} enforces that the $\xi$-variables sort the $\lambda$-values in non-decreasing order.
%\item \emph{:}
%\begin{equation}
%\xi_{jel}^0 + \dsum_{i\leq \ell} \xi_{je(\ell-i)}^1\leq 1, \forall j\in P,\ e\in E,  \ \ell\in Q. \label{8}
%\end{equation}
\item \emph{Coverage of subsegments:}
\begin{align}
w_{e\ell}&\leq\dsum_{j\in P}\left(\dsum_{i\leq \ell}\xi_{jei}^0+\dsum_{i >\ell}\xi_{jei}^1-1\right),  \forall e\in E,  \ \forall \ell\in Q,\label{9}
%w_{e\ell}&\leq \dsum_j z_{je},  \forall e\in E,  \ \ell\in Q. \label{10}
\end{align}
The coverage of a subsegment $\ell\in Q$ is assured by the existence  of a device $j$ for which its $\lambda_{je}^0$ is sorted in a position anterior to $\ell$ ($\sum_{i\leq \ell}\xi_{jei}^0=1$) and $\lambda_{je}^1$ in a posterior position to $\ell$ ($\sum_{i>\ell}\xi_{jei}^1=1$). Thus, in case both values are $1$, the conditions of Lemma \ref{lemma1} are verified, and the subsegment is covered. Otherwise, one of the above sums is zero, and the constraint is redundant.  Indeed, if $\sum_{i\leq \ell}\xi_{jei}^0=0$, then, by \eqref{6}, $\sum_{i> \ell}\xi_{jei}^0=1$. Thus, by \eqref{3} and \eqref{7}, one has that  $\sum_{i>\ell}\xi_{jei}^1=1$. Similarly, if $\sum_{i>\ell}\xi_{jei}^1=0$, one has that $\sum_{i\leq \ell}\xi_{jei}^0=1$. In both cases, $\sum_{i\leq \ell}\xi_{jei}^0+\sum_{i >\ell}\xi_{jei}^1-1$ takes value zero, implying that the $j$-th device does not cover the subsegment.
\end{enumerate}

Apart from the constraints above, we incorporate into our model the following valid inequalities that allow us to strengthen it:

\begin{enumerate}
\item \emph{Touched segments and covered subsegments:} 
$$
\dsum_{\ell\in Q} w_{e\ell}\leq 2\dsum_{j\in P} z_{je}, \;\; \forall e\in E.
$$
In case the whole segment is not touched by any device, non of the subsegments are covered.
\item \emph{Symmetry breaking:}
$$
\dsum_{k=1}^d X_{jk} \leq \dsum_{k=1}^d X_{(j+1)k}, \;\; \forall j\in P, j<p.
$$
Since the devices to be located are indistinguishable, any permutation of the $j$-index will result in an alternative optimal solution, hindering the solution procedure based on a branch-and-bound tree. The above inequality prevents such an amount of alternative optima.
\item \emph{Incompatible edges:}
$$
z_{ej} + z_{e'j} \leq 1, \;\; \forall j \in P, \forall e, e'\in E \text{ with } \min\{\|x-x'\|: x \in e, x' \in e'\} >2R.
$$
Edges that are far enough are not able to be simultaneously touched by the same device. 
\end{enumerate}

\vspace*{1cm}

{\bf \underline{Mathematical Programming Model for (MNLCLP)}}:

\vspace*{1cm}

Using the variables and constraints previously described, the following mathematical programming formulation is valid for the MNLCLP:

\begin{align*}
\max & &\ \dsum_{e\in E}\dsum_{\ell\in Q} \omega_e w_{e\ell} L_e\left[\dsum_{j\in P}\dsum_{s=0}^1\lambda_{je}^s\xi_{je(\ell+1)}^s-\dsum_{j\in P}\dsum_{s=0}^1\lambda_{je}^s\xi_{je\ell}^s\right]
\end{align*}
\begin{align*}
\mbox{s.t. } 
& \eqref{2}-\eqref{9},\\
& \lambda_{je}^s \in [0,1], & \forall j\in P, \forall e \in E, s=0,1,\\
& X_j \in \R^d, &\forall j\in P,\\
& z_{je} \in \{0,1\},& \forall j\in P, \forall e \in E,\\
& \xi_{je\ell}^s \in \{0,1\}, & \forall j\in P, \forall e\in E, \forall \ell \in Q\cup \{2p\}, s=0,1,\\
& w_{e\ell} \in \{0,1\}, &\forall e\in E, \forall \ell \in Q.
\end{align*}

\vspace*{1cm}

{\bf \underline{Mathematical Programming Model for (PSNLCLP)}:}

\vspace*{1cm}

(PSNLCLP) seeks to minimize the number of devices to cover at least a portion $\gamma \in (0,1]$ of the length of the network. Although the above variables and constraints can be used to derive similarly a model for this problem, the number of devices, $p$, to locate is unknown in this case. We estimate an upper bound for this parameter and consider the following binary variables to \emph{activate}/\emph{desactivate} them.
$$
y_j = \begin{cases}
1 & \mbox{if device }j\mbox{ is activated,}\\
0 & \mbox{otherwise}.
\end{cases} \forall j \in P.
$$

Then, the (PSNLCLP) can be formulated as follows:
\begin{align}
\min & \dsum_{j\in P} y_j\nonumber\\
\mbox{s.t. } 
& \eqref{2}-\eqref{9},\\
& \dsum_{e\in E}\dsum_{\ell\in Q} \omega_e w_{e\ell} L_e\left[\dsum_{j\in P}\dsum_{s=0}^1\lambda_{je}^s\xi_{je(\ell+1)}^s-\dsum_{j\in P}\dsum_{s=0}^1\lambda_{je}^s\xi_{je\ell}^s\right] \geq \gamma \dsum_{e\in E} \omega_e L_e,  \label{gamma}\\
& z_{je} \leq y_j, \;\;\forall j \in P, \;\;\forall e \in E;\label{zy}\\
& \lambda_{je}^s \in [0,1],  \forall j\in P, \forall e \in E, s=0,1,\nonumber\\
& X_j \in \R^d, \forall j\in P,\nonumber\\
& z_{je} \in \{0,1\}, \forall j\in P, \forall e \in E,\nonumber\\
& \xi_{je\ell}^s \in \{0,1\}, \forall j\in P, \forall e\in E, \forall \ell \in Q\cup \{2p\}, s=0,1,\nonumber\\
& w_{e\ell} \in \{0,1\}, \forall e\in E, \forall \ell \in Q,\nonumber\\
& y_j \in \{0,1\}, \forall j \in P.\nonumber
\end{align}

In this case, the objective function accounts for the number of activated devices, Constraint \eqref{gamma} assures that at least a portion of $\gamma$ of the coverage volume is attained, and \eqref{zy} prevents covering edges by devices that are not activated. 

Instead of minimizing the number of devices one may also minimize the set-up costs by incorporating individual set-up costs for each of the available devices ($f_j $ for $j\in P$) and replace the above objective function by $\sum_{j\in P} f_j y_j$.

To avoid multiple optimal solutions due to symmetry, we also incorporate into the model the following constraints that avoid activating the $j$-th device in case the $(j-1)$-th device is not activated in the solution.
$$
y_{j-1} \geq y_{j}, \forall j \in P, j>1.
$$

\begin{rmk}
The complexity of the PSNLCLP highly depends on the number of potential devices to locate ($p$), since the number of constraints and variables are affected by this parameter. We derive a method to compute a reasonable upper bound for that parameter which is based on computing the minimum number of devices necessary to cover each edge in $U_\gamma \subseteq E$ where $U_\gamma$ is defined as a minimal set verifying that $\dsum_{e \in U_\gamma} \omega_e L_e \geq \gamma \dsum_{e\in E} \omega_e L_e$. 

We initialize $U_\gamma=\emptyset$, and sort the sequence $\left\{\omega_e L_e\right\}_{e\in E}$ such that $\omega_{e_1} L_{e_1}\geq \omega_{e_2} L_{e_2}\geq\cdots\geq \omega_{e_i} L_{e_i}\geq \omega_{e_{i+1}} L_{e_{i+1}}\geq \cdots$. Then, we define $U_\gamma = \{e_1, \ldots, e_k\}$ such that:
$$
\dsum_{i=1}^k \omega_{e_i} L_{e_i} \geq \gamma \dsum_{e\in E} \omega_e L_e > \dsum_{i=1}^{k-1} \omega_{e_i} L_{e_i}.
$$
Since the minimum number of devices necessary to cover a single edge $e$ is $\left\lceil\frac{L_e}{2R}\right\rceil$, we can fix $p=\dsum_{e\in U_\gamma}\left\lceil\frac{L_e}{2R}\right\rceil$.
\end{rmk}

The Mixed Integer Non-Linear Programming models that we develop for (MNLCLP) and (PSNLCLP) have $O(p^2 |E|)$ variables, $O(p|E|)$ linear constraints, and $O(p|E| f_{\|\cdot\|})$ nonlinear constraints (here, $f_{\|\cdot\|}$ stand for the number of constraints that allow rewriting Constraints \ref{2} as second-order cone constraints (see \citep{blanco2014revisiting} for upper bounds on this number for $\ell_\tau$-norms). Thus, it is advisable in these models to design alternative solution strategies for solving them or to provide initial solutions that alleviate the search for optimal solutions by providing lower bounds for our problem.  In the following sections, we propose different alternatives taking advantage of the geometric properties of these problems.

\subsection{Constructing initial feasible solutions}\label{sec:5a}

The geometric properties that we derive in Section \ref{sec:3a} for the single device problem can be also extended to the $p$-device case. Specifically, one can construct solutions of MNLCLP by avoiding the computation of covered lengths in the models and assuming that once an edge of the network is touched by the coverage area of a device, the whole is accounted as covered. With these assumptions, we construct initial solutions to our problem by solving the following integer linear programs:
\begin{align}
\max &\dsum_{e\in E} \dsum_{j\in P} \omega_e L_e z_{je}\label{m_0:p}\\
\mbox{s.t. } & \dsum_{j\in P} z_{je} \leq 1, \forall e \in E,\\
&\dsum_{e\in S} z_{je} \leq d, \forall S\subset E (|S|=d+1): \bigcap_{e \in S} (e \oplus \B_{R}(0)) = \emptyset, \forall j \in P,\label{m_exp:p}\\
& z_{je}\in \{0,1\}, \forall e\in E, \forall j \in P.\label{m_f:p}
\end{align}
In the problem above, the overall weighted length of the covered edges is to be maximized by restricting edges to be covered by the same device to those which are feasible for the MNLCLP. The edges are also enforced to be accounted for at most once in the solution.

The strategies for generating and separating the constraints of the above problem are identical to those detailed in Section \ref{sec:3a}. 

As can be observed in our computational experience (Section \ref{sec:5}), the incorporation of these initial solutions to the original formulation of MNLCLP is advisable to derive exact optimal solutions to the problem in less CPU time.

\subsection{Math-heuristic approach}\label{sec:5b}

In addition to the exact approaches provided by formulations MNLCLP or PSNLCLP and the generation of initial solutions in the previous section, we propose a math-heuristic procedure to obtain good quality approximated solutions for the problems for larger size instances, in less CPU time. The math-heuristic is based on solving, sequentially, the single-device location problem \eqref{m1:1}-\eqref{m1:7} that was described in Section \ref{sec:3a}. 

We show in Algorithm \ref{alg_math2} a pseudocode for this procedure. As already mentioned, the approach is based on solving, sequentially, a single-device location device problem until a certain termination criterion (which depends on the problem to solve, MNLCLP or PSNLCLP) is verified. In case the problem is the MNLCLP the algorithm ends when the number of devices in the pool reaches the value of $p$. Otherwise, for the PSNLCLP the algorithm ends when the covered weighted length reaches the desired value. 

At each iteration, a device is located, and the network to be covered in the next iteration is updated from the previous iteration by removing the segments that have been covered.

\begin{algorithm}[h!]
 \KwData{Network $G=(V,E;\Omega)$, number of devices $p$ and radius $R$.}

 %$M:=\left\{ (e,e')\in E\times E : {\rm d}(e,e')>2R\right\}$.
 $V'=V, E'=E, \Omega'=\Omega$\\
 $X=\emptyset$\\
 
 \While{\texttt{Termination\_Criterion}}{
    Solve $X', \lambda_e^0, \lambda_e^1, z_e=$ \text{arg} \eqref{m1:0}-\eqref{m1:7} for $e\in E', \omega_e\in \Omega'$ and $R$.\\
    Update \texttt{Termination\_Criterion}
    Add $X'$ to $X$.\\
 	\For{$e \in E'$}{
 	      \If{$z_e=1$}{
            \If{$\lambda_e^0\in (0,1)$}{
                Add $Y_e^0$ to $V'$.\\
                Add $\{o_e,Y_e^0\}$ to $E'$.\\
                Add $\omega_e$ to $\Omega'$.\\
                }
            \If{$\lambda_e^1\in (0,1)$}{
                Add $Y_e^1$ to $V'$.\\
                Add $\{Y_e^1,f_e\}$ to $E'$.\\
                Add $\omega_e$ to $\Omega'$.\\
                }
            Remove $e$ from $E'$
         }
         }
         }
\KwResult{$X\in \R^{d\times p}$: Location of the devices.}
 \caption{Math-heuristic 2.\label{alg_math2}}
\end{algorithm}

\section{Computational Experiments}\label{sec:5}

In this section, we report on the results of a series of computational experiments performed to empirically assess our methodological contribution to the p-MNLCLP and PSNLCLP presented in the previous sections. We use six real networks obtained from two different sources: one based on the networks developed by the University of Exeter's (UOE) Centre for Water Systems available in \url{https://emps.exeter.ac.uk/engineering/research/cws/resources/benchmarks/} and other privately provided by Prof. Ormsbee from the University of Kentucky (UKY). These networks, which are called \texttt{gessler}, \texttt{jilin}, \texttt{richmond}, \texttt{foss}, \texttt{rural} and \texttt{zj}, have $14, 34, 44, 58, 60$ and $85$ edges, respectively. The networks have been scaled to fit in a disk of radius $5$. The networks are drawn in Figure \ref{fig:networks}.

\begin{figure}[h!]
\begin{subfigure}[b]{0.3\textwidth}
\centering
\includegraphics[width=\linewidth]{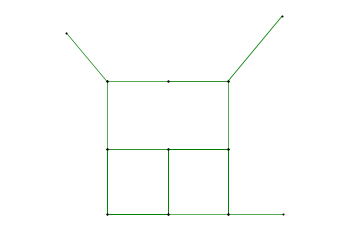}
\caption{\texttt{gessler}}
\end{subfigure}
\hspace{0.2cm}
\begin{subfigure}[b]{0.3\textwidth}
\centering
\includegraphics[width=\linewidth]{jilin.png}
\caption{\texttt{jilin}}
\end{subfigure}
\hspace{0.2cm}
\begin{subfigure}[b]{0.3\textwidth}
\centering
\includegraphics[width=\linewidth]{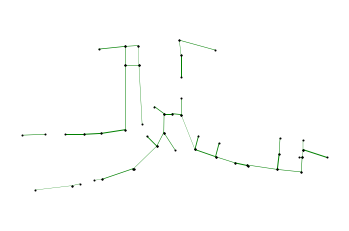}
\caption{\texttt{richmond}}
\end{subfigure}
     \begin{subfigure}[b]{0.3\textwidth}
         \centering
        \includegraphics[width=\linewidth]{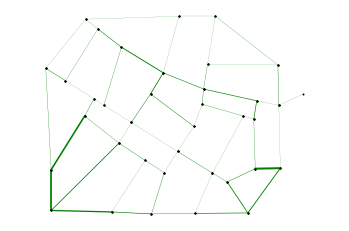}
        \caption{\texttt{foss}}
\end{subfigure}
\hspace{0.2cm}
     \begin{subfigure}[b]{0.3\textwidth}
         \centering
\centering
\includegraphics[width=\linewidth]{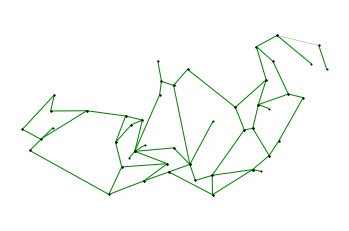}
\caption{\texttt{rural}}
\end{subfigure}
\hspace{0.2cm}
     \begin{subfigure}[b]{0.3\textwidth}
\centering
\includegraphics[width=\linewidth]{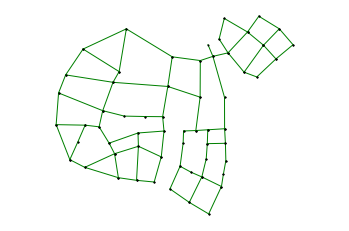}
\caption{\texttt{zj}}
\end{subfigure}
\caption{Networks used in our computational experiments. \label{fig:networks}}
\end{figure}

We have run the different approaches for the MNLCLP and the PSNLCLP for disk-shaped coverage areas with radii ranging in $\{0.1,0.25,0.5\}$. For the MNLCLP the number of devices to locate, $p$, ranges in $\{2,5,8\}$, whereas for the PSNLCLP the values of $\gamma$ range in $\{0.5, 0.75, 1\}$.

All the experiments have been run on a virtual machine in a physical server equipped
with 12 threads from a processor AMD EPYC 7402P 24-Core Processor, 64 Gb of RAM, and
running a 64-bit Linux operating system. The models were coded in Python 3.7 and we used
Gurobi 9.1 as optimization solver. A time limit of $5$ hours was set for all the experiments.

In Tables \ref{tab:mclp} and \ref{tab:pscp} we show the average results obtained in our experiments. We report average values of the consumed CPU time (in seconds), and percent of unsolved instances, and the MIP Gap within the time limit. Both tables are similarly organized. In the first block (first three columns), the name of the instance together with its number of nodes and edges is provided. In the second block (next two columns) we write the values of $p$ (for the MNLCLP) or $\gamma$ (for the PSNLCLP) and the radius. The next three blocks are the results obtained with each of the approaches. For the MNLCLP we run the MISOCO formulation, and also the exact and math-heuristic solution approaches detailed in section \ref{sec:5a} (MNLCLP\_1, for short) and \ref{sec:5b} (MNLCLP\_2), respectively.  We do not report results on the Unsolved instances and MIPGap for the MNLCLP\_2 since all the instances were solved within the time limit with that approach. In Table \ref{tab:pscp} the results are organized similarly for the PSNLCLP, but we do not generate initial solutions since that strategy only applies to the MNLCLP, and only the strategy PSNLCLP\_2. The flag \texttt{TL} indicates that all the instances averaged in the row reach the time limit without certifying optimality. The flag \texttt{OoM} indicates that the solver outputs \emph{Out of Memory} at some point when solving the instance. 

The first observation from the results that we obtain is that both problems are computationally challenging since they require large CPU times to solve even the small instances. Actually, the exact MNLCLP was only able to solve up to optimality, small instances with small values of $p$, and the exact PSNLCLP only solved a few instances, and in many of them, the solver outputs Out of Memory when solving them.

The first alternative (and exact) strategy, MNLCLP\_1, based on constructing initial solutions to the problem, had a slightly better performance with respect to those instances that were solved with the initial formulation, both in CPU time and MIPGap. Some of the instances were not able to be solved with MNLCLP but were able to be solved with the initial solutions that we construct. 

With respect to the heuristic approach, the consumed CPU times are tiny compared to the times required by the exact approaches and was able to construct feasible solutions for all the instances, even for those that the exact approaches flagged Out of Memory. In terms of the quality of the obtained solutions, in Figure \ref{fig:devs} we show the average deviations (for each instance) of the alternative approaches with respect to the original one. This measure provides the percent improvement of the alternative method with respect to the best solution obtained by the original formulation of the problem. We observed that the solutions that we obtained with the two strategies are significantly better than those obtained with the original formulation for the MNLCLP within the time limit. Providing initial solutions to the problem allows us to obtain solutions with 20\% more coverage than the initial formulation, whereas the heuristic approach gets solutions with more than $25\%$ more coverage. In the case of the PSNLCLP, in most, if instances the solutions of the heuristic are better than the ones obtained with the exact approach, but in instance \texttt{jilin}, the solutions are $20\%$ worse than the obtained with the exact approach.

In Figure \ref{fig:boxplot} we plot boxplots for the average of the best weighted coverages obtained by each of the approaches within the time limit for each of the six instances. Regarding the distribution of the results, one cannot infer that the results are significantly different. Nevertheless, in terms of the central tendencies, there is a clear improvement in the results obtained with the two alternative methods with respect to the original formulation, especially for the larger instances (see Figure \ref{fig:devs}).

\begin{table}[htbp]
  \centering
   \adjustbox{scale=0.8}{\begin{tabular}{|p{1.5cm}p{0.4cm}p{0.4cm}|p{0.2cm}r|rrr|rr|rr|}\hline
          &       &       &       &       & \multicolumn{3}{c|}{CPU Time (secs)} & \multicolumn{2}{c|}{Unsolved}       & \multicolumn{2}{c|}{GAP (\%)} \\
    {Instance} & {$|V|$} & {$|E|$} & {$p$} & {$R$} & MNLCLP  & MNLCLP\_1 & {MNLCLP\_2} & {MNLCLP} & {MNCLP\_1} & {MNLCLP} & {MNLCLP\_1} \\\hline\hline
    \multirow{9}[0]{*}{\texttt{gessler}} & \multirow{9}[0]{*}{12} & \multirow{9}[0]{*}{14} & \multirow{3}[0]{*}{2} & 0.1   & {151.53} & {13.69} & 0.89  & 0\%   & 0\%   & 0\%   & 0\% \\
          &       &       &       & 0.25  & {48.97} & {11.87} & 1.34  & 0\%   & 0\%   & 0\%   & 0\% \\
          &       &       &       & 0.5   & {26.28} & {10.59} & 0.62  & 0\%   & 0\%   & 0\%   & 0\% \\\cline{4-12}
          &       &       & \multirow{3}[0]{*}{5} & 0.1   & \texttt{TL} & \texttt{TL} & 2.26  & 100\% & 100\% & 86\%  & 84\% \\
          &       &       &       & 0.25  & \texttt{TL} & \texttt{TL} & 2.92     & 100\% & 100\% & 69\%  & 62\% \\
          &       &       &       & 0.5   & \texttt{TL} & \texttt{TL} & 1.61  & 100\% & 100\% & 24\%  & 31\% \\\cline{4-12}
          &       &       & \multirow{3}[0]{*}{8} & 0.1   & \texttt{TL} & \texttt{TL} & 3.54  & 100\% & 100\% & 90\%  & 87\% \\
          &       &       &       & 0.25  & \texttt{TL} & \texttt{TL} & 5.59  & 100\% & 100\% & 74\%  & 69\% \\
          &       &       &       & 0.5   & \texttt{TL} & \texttt{TL} & 2.92  & 100\% & 100\% & 41\%  & 35\% \\\hline
    \multirow{9}[0]{*}{\texttt{jilin}} & \multirow{9}[0]{*}{28} & \multirow{9}[0]{*}{34} & \multirow{3}[0]{*}{2} & 0.1   & {167.25} & {39.10} & 1.99  & 0\%   & 0\%   & 0\%   & 0\% \\
          &       &       &       & 0.25  & {196.56} & {144.30} & 3.37  & 0\%   & 0\%   & 0\%   & 0\% \\
          &       &       &       & 0.5   & {164.83} & {152.10} & 2.45  & 0\%   & 0\%   & 0\%   & 0\% \\\cline{4-12}
          &       &       & \multirow{3}[0]{*}{5} & 0.1   & \texttt{TL} & \texttt{TL} & 2.95  & 100\% & 100\% & 86\%  & 85\% \\
          &       &       &       & 0.25  & \texttt{TL} & \texttt{TL} & 6.64  & 100\% & 100\% & 72\%  & 64\% \\
          &       &       &       & 0.5   & \texttt{TL} & \texttt{TL} & 3.17  & 100\% & 100\% & 40\%  & 42\% \\\cline{4-12}
          &       &       & \multirow{3}[0]{*}{8} & 0.1   & \texttt{TL} & \texttt{TL} & 6.07  & 100\% & 100\% & 88\%  & 84\% \\
          &       &       &       & 0.25  & \texttt{TL} & \texttt{TL} & 10.34 & 100\% & 100\% & 72\%  & 73\% \\
          &       &       &       & 0.5   & \texttt{TL} & \texttt{TL} & 4.67  & 100\% & 100\% & 70\%  & 37\% \\\hline
    \multirow{9}[0]{*}{\texttt{richmond}} & \multirow{9}[0]{*}{48} & \multirow{9}[0]{*}{44} & \multirow{3}[0]{*}{2} & 0.1   & {1180.62} & {133.99} & 8.75  & 0\%   & 0\%   & 0\%   & 0\% \\
          &       &       &       & 0.25  & {717.09} & {121.90} & 7.47  & 0\%   & 0\%   & 0\%   & 0\% \\
          &       &       &       & 0.5   & {184.63} & {244.25} & 2.32  & 0\%   & 0\%   & 0\%   & 0\% \\\cline{4-12}
          &       &       & \multirow{3}[0]{*}{5} & 0.1   & \texttt{TL} & \texttt{TL} & 23.22 & 100\% & 100\% & 78\%  & 77\% \\
          &       &       &       & 0.25  & \texttt{TL} & \texttt{TL} & 13.79 & 100\% & 100\% & 62\%  & 59\% \\
          &       &       &       & 0.5   & \texttt{TL} & \texttt{TL} & 3.70  & 100\% & 100\% & 42\%  & 41\% \\\cline{4-12}
          &       &       & \multirow{3}[0]{*}{8} & 0.1   & \texttt{TL} & \texttt{TL} & 33.64 & 100\% & 100\% & 88\%  & 85\% \\
          &       &       &       & 0.25  & \texttt{TL} & \texttt{TL} & 23.89 & 100\% & 100\% & 86\%  & 71\% \\
          &       &       &       & 0.5   & \texttt{TL} & \texttt{TL} & 5.82  & 100\% & 100\% & 71\%  & 56\% \\\hline
    \multirow{9}[0]{*}{\texttt{foss}} & \multirow{9}[0]{*}{37} & \multirow{9}[0]{*}{58} & \multirow{3}[0]{*}{2} & 0.1   & {561.98} & {39.61} & 2.77  & 0\%   & 0\%   & 0\%   & 0\% \\
          &       &       &       & 0.25  & {380.54} & {38.42} & 1.99  & 0\%   & 0\%   & 0\%   & 0\% \\
          &       &       &       & 0.5   & {196.92} & {86.40} & 1.83  & 0\%   & 0\%   & 0\%   & 0\% \\\cline{4-12}
          &       &       & \multirow{3}[0]{*}{5} & 0.1   & \texttt{TL} & \texttt{TL} & 6.49  & 100\% & 100\% & 82\%  & 80\% \\
          &       &       &       & 0.25  & \texttt{TL} & \texttt{TL} & 5.46  & 100\% & 100\% & 64\%  & 62\% \\
          &       &       &       & 0.5   & \texttt{TL} & \texttt{TL} & 4.31  & 100\% & 100\% & 61\%  & 56\% \\\cline{4-12}
          &       &       & \multirow{3}[0]{*}{8} & 0.1   & \texttt{TL} & \texttt{TL} & 9.33  & 100\% & 100\% & 88\%  & 86\% \\
          &       &       &       & 0.25  & \texttt{TL} & \texttt{TL} & 7.99  & 100\% & 100\% & 87\%  & 71\% \\
          &       &       &       & 0.5   & \texttt{TL} & \texttt{TL} & 9.11  & 100\% & 100\% & 78\%  & 64\% \\\hline
    \multirow{9}[0]{*}{\texttt{rural}} & \multirow{9}[0]{*}{48} & \multirow{9}[0]{*}{60} & \multirow{3}[0]{*}{2} & 0.1   & {12263.72} & {1169.41} & 16.94 & 0\%   & 0\%   & 0\%   & 0\% \\
          &       &       &       & 0.25  & \texttt{TL} & {559.93} & 15.69 & 100\% & 0\%   & 23\%  & 0\% \\
          &       &       &       & 0.5   & {5054.64} & {1612.73} & 13.98 & 0\%   & 0\%   & 0\%   & 0\% \\\cline{4-12}
          &       &       & \multirow{3}[0]{*}{5} & 0.1   & \texttt{TL} & \texttt{TL} & 26.46 & 100\% & 100\% & 92\%  & 91\% \\
          &       &       &       & 0.25  & \texttt{TL} & \texttt{TL} & 32.19 & 100\% & 100\% & 83\%  & 82\% \\
          &       &       &       & 0.5   & \texttt{TL} & \texttt{TL} & 21.99 & 100\% & 100\% & 79\%  & 77\% \\\cline{4-12}
          &       &       & \multirow{3}[0]{*}{8} & 0.1   & \texttt{TL} & \texttt{TL} & 40.89 & 100\% & 100\% & 97\%  & 94\% \\
          &       &       &       & 0.25  & \texttt{TL} & \texttt{TL} & 49.51 & 100\% & 100\% & 91\%  & 86\% \\
          &       &       &       & 0.5   & \texttt{TL} & \texttt{TL} & 40.66 & 100\% & 100\% & 94\%  & 84\% \\\hline
    \multirow{9}[0]{*}{\texttt{zj}} & \multirow{9}[0]{*}{60} & \multirow{9}[0]{*}{85} & \multirow{3}[0]{*}{2} & 0.1   & \texttt{TL} & \texttt{TL} & 13.12 & 100\% & 100\% & 49\%  & 65\% \\
          &       &       &       & 0.25  & \texttt{TL} & {5235.81} & 7.29  & 100\% & 0\%   & 51\%  & 0\% \\
          &       &       &       & 0.5   & \texttt{TL} & {9603.61} & 9.33  & 100\% & 0\%   & 5\%   & 0\% \\\cline{4-12}
          &       &       & \multirow{3}[0]{*}{5} & 0.1   & \texttt{TL} & \texttt{TL} & 25.56 & 100\% & 100\% & 96\%  & 95\% \\
          &       &       &       & 0.25  & \texttt{TL} & \texttt{TL} & 27.48 & 100\% & 100\% & 90\%  & 89\% \\
          &       &       &       & 0.5   & \texttt{TL} & \texttt{TL} & 18.32 & 100\% & 100\% & 87\%  & 86\% \\\cline{4-12}
          &       &       & \multirow{3}[0]{*}{8} & 0.1   & \texttt{TL} & \texttt{TL} & 37.85 & 100\% & 100\% & 98\%  & 96\% \\
          &       &       &       & 0.25  & \texttt{TL} & \texttt{TL} & 31.05 & 100\% & 100\% & 94\%  & 90\% \\
          &       &       &       & 0.5   & \texttt{TL} & \texttt{TL} & 20.45 & 100\% & 100\% & 91\%  & 85\% \\\hline
    \end{tabular}}%
   \caption{Computational results for the MNLCLP approaches. \label{tab:mclp}}%
\end{table}%

\begin{table}[htbp]
  \centering
   \adjustbox{scale=0.8}{\begin{tabular}{|p{1.5cm}p{0.5cm}p{0.5cm}|p{0.5cm}r|rr|r|r|}\hline
          &       &       &       &       & \multicolumn{2}{c|}{CPU Time (secs)} & \multicolumn{1}{c|}{Unsolved}       & \multicolumn{1}{c|}{GAP (\%)} \\
    {Instance} & {$|V|$} & {$|E|$} & {$\gamma$} & {$R$} & PSNLCLP  &  {PSNLCLP\_1} & {PSNLCLP} &  {PSNLCLP} \\\hline\hline
   \cline{6-9}    \multirow{9}[1]{*}{\texttt{gessler}} & \multirow{9}[1]{*}{12} & \multirow{9}[1]{*}{14} & \multirow{3}[1]{*}{0.5} & 0.1   & \multicolumn{1}{r}{\texttt{TL}} & 19.15 & 100\% & 96\% \\
          &       &       &       & 0.25  & \multicolumn{1}{r}{\texttt{TL}} & 6.28  & 100\% & 89\% \\
          &       &       &       & 0.5   & \multicolumn{1}{r}{\texttt{TL}} & 1.90  & 100\% & 75\% \\\cline{4-9}
          &       &       & \multirow{3}[0]{*}{0.75} & 0.1   & \multicolumn{1}{r}{\texttt{TL}} & 30.27 & 100\% & 98\% \\
          &       &       &       & 0.25  & \multicolumn{1}{r}{\texttt{TL}} & 10.94 & 100\% & 93\% \\
          &       &       &       & 0.5   & \multicolumn{1}{r}{\texttt{TL}} & 3.39  & 100\% & 86\% \\\cline{4-9}
          &       &       & \multirow{3}[0]{*}{1} & 0.1   & \multicolumn{1}{r}{\texttt{TL}} & 39.76 & 100\% & 97\% \\
          &       &       &       & 0.25  & \multicolumn{1}{r}{\texttt{TL}} & 13.67 & 100\% & 93\% \\
          &       &       &       & 0.5   & \multicolumn{1}{r}{\texttt{TL}} & 4.74  & 100\% & 89\% \\\hline
    \multirow{9}[0]{*}{\texttt{jilin}} & \multirow{9}[0]{*}{28} & \multirow{9}[0]{*}{34} & \multirow{3}[0]{*}{0.5} & 0.1   & \multicolumn{1}{r}{\texttt{TL}} & 26.40 & 100\% & 96\% \\
          &       &       &       & 0.25  & \multicolumn{1}{r}{\texttt{TL}} & 13.29 & 100\% & 87\% \\
          &       &       &       & 0.5   & \multicolumn{1}{r}{\texttt{TL}} & 3.44  & 100\%   & 67\% \\\cline{4-9}
          &       &       & \multirow{3}[0]{*}{0.75} & 0.1   & \multicolumn{1}{r}{\texttt{OoM}} & 54.77 & 100\% & - \\
          &       &       &       & 0.25  & \multicolumn{1}{r}{\texttt{TL}} & 20.00 & 100\% & 95\% \\
          &       &       &       & 0.5   & \multicolumn{1}{r}{\texttt{TL}} & 4.60  & 100\% & 88\% \\\cline{4-9}
          &       &       & \multirow{3}[0]{*}{1} & 0.1   &     \multicolumn{1}{r}{\texttt{OoM}}   & 78.69 &   100\%    & - \\
          &       &       &       & 0.25  & \multicolumn{1}{r}{\texttt{TL}} & 24.36 & 100\% & 97\% \\
          &       &       &       & 0.5   & \multicolumn{1}{r}{\texttt{TL}} & 7.05  & 100\% & 95\% \\\hline
    \multirow{9}[0]{*}{\texttt{richmond}} & \multirow{9}[0]{*}{48} & \multirow{9}[0]{*}{44} & \multirow{3}[0]{*}{0.5} & 0.1   &   \multicolumn{1}{r}{\texttt{TL}}     & 57.79 &  100\%     &  94\%\\
          &       &       &       & 0.25  & \multicolumn{1}{r}{\texttt{TL}} & 14.49 & 100\% & 92\% \\
          &       &       &       & 0.5   & \multicolumn{1}{r}{\texttt{TL}} & 3.90  & 100\% & 71\% \\\cline{4-9}
          &       &       & \multirow{3}[0]{*}{0.75} & 0.1   & \multicolumn{1}{r}{\texttt{OoM}} & 91.99 & 100\% & - \\
          &       &       &       & 0.25  & \multicolumn{1}{r}{\texttt{TL}} & 21.33 & 100\% & 93\% \\
          &       &       &       & 0.5   & \multicolumn{1}{r}{\texttt{TL}} & 5.62  & 100\% & 91\% \\\cline{4-9}
          &       &       & \multirow{3}[0]{*}{1} & 0.1   &   \multicolumn{1}{r}{\texttt{OoM}}    & 116.10 &   100\%    &  -\\
          &       &       &       & 0.25  & \multicolumn{1}{r}{\texttt{TL}} & 25.68 & 100\% & 94\% \\
          &       &       &       & 0.5   & \multicolumn{1}{r}{\texttt{TL}} & 7.67  & 100\% & 96\% \\\hline
    \multirow{9}[0]{*}{\texttt{foss}} & \multirow{9}[0]{*}{37} & \multirow{9}[0]{*}{58} & \multirow{3}[0]{*}{0.5} & 0.1   & \multicolumn{1}{r}{\texttt{TL}} & 41.95 & 100\% & 96\% \\
          &       &       &       & 0.25  & \multicolumn{1}{r}{\texttt{TL}} & 14.21 & 100\% & 92\% \\
          &       &       &       & 0.5   & \multicolumn{1}{r}{\texttt{TL}} & 6.83  & 100\% & 75\% \\\cline{4-9}
          &       &       & \multirow{3}[0]{*}{0.75} & 0.1   &   \multicolumn{1}{r}{\texttt{OoM}}    & 111.96 &   100\%    &  -\\
          &       &       &       & 0.25  & \multicolumn{1}{r}{\texttt{TL}} & 26.74 & 100\% & 97\% \\
          &       &       &       & 0.5   & \multicolumn{1}{r}{\texttt{TL}} & 11.54 & 100\% & 94\% \\\cline{4-9}
          &       &       & \multirow{3}[0]{*}{1} & 0.1   &   \multicolumn{1}{r}{\texttt{OoM}}    & 230.93 &   100\%    &  -\\
          &       &       &       & 0.25  &   \multicolumn{1}{r}{\texttt{OoM}}    & 61.73 &   100\%   & - \\
          &       &       &       & 0.5   &   \multicolumn{1}{r}{\texttt{OoM}}    & 19.59 &   100\%    &  -\\\hline
    \end{tabular}}%
   \caption{Computational results for the PSNLCLP approaches. \label{tab:pscp}}%
\end{table}%

\begin{figure}
\begin{center}
\includegraphics[scale=0.25]{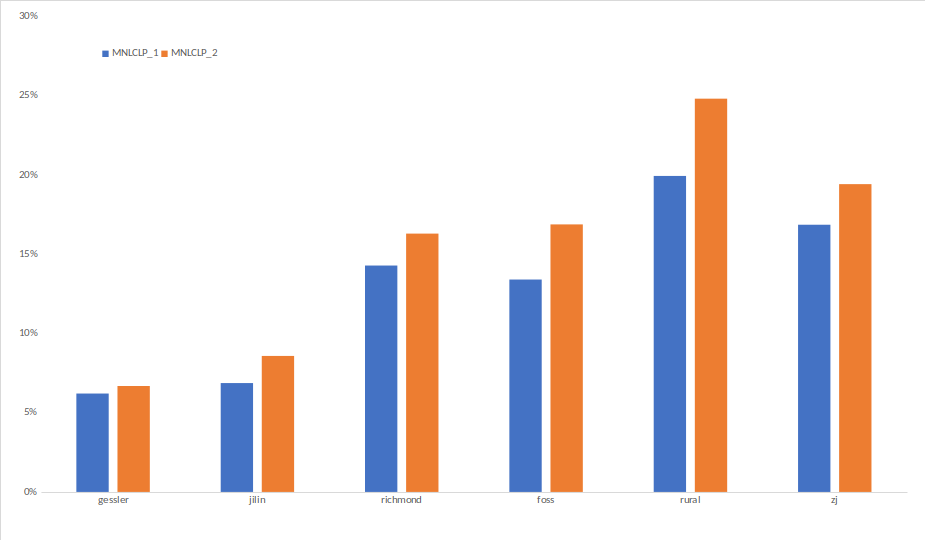}~\includegraphics[scale=0.3]{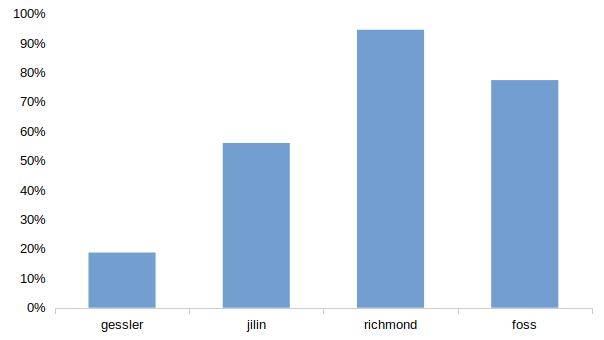}
\end{center}
\caption{Average coverage deviations of the MNLCLP\_1 and MNLCLP\_2 approach with respect to MNLCLP (left) and PSNLCLP\_1 approach for PSNLCLP (right).\label{fig:devs}}
\end{figure}

\begin{figure}
\centering
\includegraphics[width=1\linewidth]{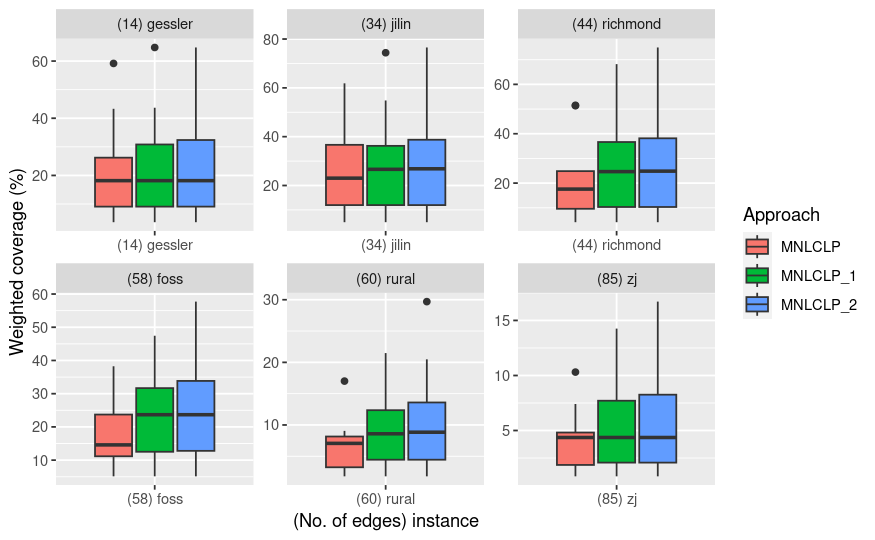}
\caption{Weighted coverage comparison between the solutions of the three approaches MNLCLP, MNLCLP\_1, and MNLCLP\_2.\label{fig:boxplot}}
\end{figure}

\subsection*{Node and Edge-restricted models}
Finally, we run some experiments to validate our proposal in terms of the coverages obtained with our approaches and those obtained by the node and edge-constrained versions of our model. In those models, the devices are allowed to be located only at the nodes or the edges of the network, respectively, instead of the whole space as in our model. We have run these restricted versions of the Maximal Coverage model, for the same instances and parameters as in the previous section.

First, in Figure \ref{fig_devs1} we plot the average deviations of the weighted volume coverages of these approaches with respect to those obtained with our model (the best solution obtained with our solution approaches). One can observe that, in some of the instances, the deviations are close to $40\%$, that is, our model obtained solutions covering $40\%$ more volume of the network than the restricted versions. As expected, the edge-restricted version covers more length of the network than the node-restricted model, although, in most of the instances, the differences are less than $5\%$. 

\begin{figure}
    \centering\includegraphics{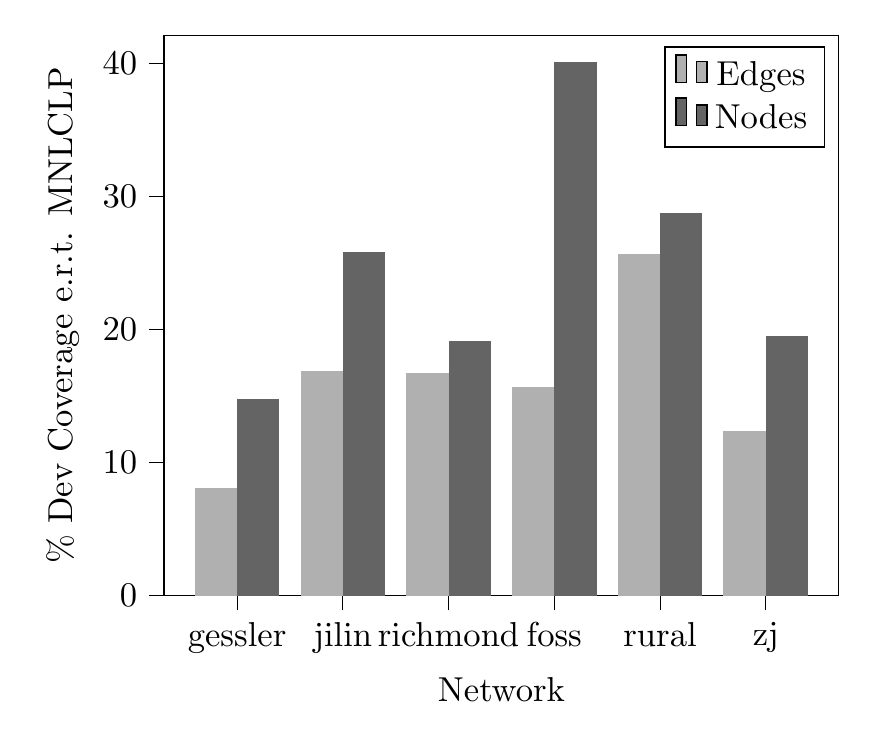}
    \caption{Average deviations of the coverages obtained with the node and edge-restricted versions of MNLCLP with respect to MNLCLP. \label{fig_devs1}}
\end{figure}

The detailed average results for the different values of $p$ and $R$ are shown in Table \ref{table:edgeandnodes}. As can be observed, as larger the number of devices to be located the larger the deviations with respect to the two restricted models. Nevertheless, the deviations do not seem to follow a clear pattern with respect to the radii. For some instances, the deviations are larger for smaller values of $R$ whereas for others the deviations are larger for the larger values.

\begin{table}[htbp]
  \centering
    \begin{tabular}{cp{0.2cm}rrr}
    {\small Network} & $p$ & $R$ & {\small Dev\_Nodes} & {\small Dev\_Edges}\\\hline\hline
    \multirow{9}[0]{*}{\texttt{gessler}} & \multirow{3}[0]{*}{2} & 0.1   & 9.94\% & 1.20\% \\
          &       & 0.25  & 9.84\% & 1.11\% \\
          &       & 0.5   & 9.82\% & 1.10\% \\\cline{3-5}
          & \multirow{3}[0]{*}{5} & 0.1   & 11.99\% & 2.78\% \\
          &       & 0.25  & 13.92\% & 6.22\% \\
          &       & 0.5   & 11.66\% & 4.51\% \\\cline{3-5}
          & \multirow{3}[0]{*}{8} & 0.1   & 23.46\% & 22.10\% \\
          &       & 0.25  & 20.47\% & 19.13\% \\
          &  &0.5 & 21.42\% & 14.82\% \\\hline
    \multirow{9}[0]{*}{\texttt{jilin}} & \multirow{3}[0]{*}{2} & 0.1   & 22.72\% & 0.97\% \\
          &       & 0.25  & 19.88\% & 1.53\% \\
          &       & 0.5   & 20.38\% & 10.49\% \\\cline{3-5}
          & \multirow{3}[0]{*}{5} & 0.1   & 24.89\% & 16.42\% \\
          &       & 0.25  & 18.26\% & 17.48\% \\
          &       & 0.5   & 21.50\% & 9.88\% \\\cline{3-5}
          & \multirow{3}[0]{*}{8} & 0.1   & 49.72\% & 45.95\% \\
          &       & 0.25  & 24.79\% & 38.00\% \\
          &       & 0.5   & 29.79\% & 11.23\% \\\hline
    \multirow{9}[0]{*}{\texttt{richmond}} & \multirow{3}[0]{*}{2} & 0.1   & 12.62\% & 2.32\% \\
          &       & 0.25  & 9.53\% & 3.53\% \\
          &       & 0.5   & 16.67\% & 9.03\% \\\cline{3-5}
          & \multirow{3}[0]{*}{5} & 0.1   & 13.98\% & 6.72\% \\
          &       & 0.25  & 19.83\% & 9.30\% \\
          &       & 0.5   & 14.48\% & 17.60\% \\\cline{3-5}
          & \multirow{3}[0]{*}{8} & 0.1   & 27.39\% & 32.84\% \\
          &       & 0.25  & 23.76\% & 38.96\% \\
          &       & 0.5   & 33.90\% & 29.89\% \\\hline

    \end{tabular}~\begin{tabular}{cp{0.2cm}rrr}
    {\small Network} & $p$ & $R$ & {\small Dev\_Nodes} & {\small Dev\_Edges}\\\hline\hline
    \multirow{9}[0]{*}{\texttt{foss}} & \multirow{3}[0]{*}{2} & 0.1   & 32.77\% & 2.14\% \\
          &       & 0.25  & 32.82\% & 2.08\% \\
          &       & 0.5   & 26.36\% & 4.32\% \\\cline{3-5}
          & \multirow{3}[0]{*}{5} & 0.1   & 32.80\% & 22.35\% \\
          &       & 0.25  & 36.47\% & 9.38\% \\
          &       & 0.5   & 19.12\% & 9.55\% \\\cline{3-5}
          & \multirow{3}[0]{*}{8} & 0.1   & 60.73\% & 40.86\% \\
          &       & 0.25  & 61.87\% & 17.85\% \\
          &       & 0.5   & 57.66\% & 32.72\% \\\hline
    \multirow{9}[0]{*}{\texttt{rural}} & \multirow{3}[0]{*}{2} & 0.1   & 19.94\% & 0.67\% \\
          &       & 0.25  & 19.42\% & 0.43\% \\
          &       & 0.5   & 6.03\% & 1.14\% \\\cline{3-5}
          & \multirow{3}[0]{*}{5} & 0.1   & 23.77\% & 22.78\% \\
          &       & 0.25  & 20.44\% & 45.94\% \\
          &       & 0.5   & 16.37\% & 39.60\% \\\cline{3-5}
          & \multirow{3}[0]{*}{8} & 0.1   & 40.18\% & 58.42\% \\
          &       & 0.25  & 80.81\% & 31.23\% \\
          &       & 0.5   & 31.41\% &  30.54\%\\\hline
\multirow{9}[0]{*}{\texttt{zj}} & \multirow{3}[0]{*}{2} & 0.1   & 1.32\% & 0.97\% \\
          &       & 0.25  & 1.31\% & 0.08\% \\
          &       & 0.5   & 5.86\% & 2.59\% \\\cline{3-5}
          & \multirow{3}[0]{*}{5} & 0.1   & 14.70\% & 9.53\% \\
          &       & 0.25  & 5.67\% & 8.58\% \\
          &       & 0.5   & 12.35\% & 8.18\% \\\cline{3-5}
          & \multirow{3}[0]{*}{8} & 0.1   & 69.01\% & 26.19\% \\
          &       & 0.25  & 33.11\% & 27.69\% \\
          &       & 0.5   & 32.38\% &  27.59\%\\\hline
\end{tabular}

  \caption{Average deviations of node and edge-restricted MNLCLP with respect to MNLCLP for the different values of $p$ and $R$.\label{table:edgeandnodes}}
\end{table}%

In Figures \ref{fig:foss} and \ref{fig:rural} we show the best solutions obtained with the three models for two of the instances (\texttt{foss} with $p=5$ and \texttt{rural} with $p=8$ both of them with $R=0.5$). In the left plot of both figures, we show the optimal solution of our model. The center and right plots are the solutions obtained with the edge-restricted and node-restricted versions of the MNLCLP, respectively. As can be observed, the optimal location of the devices differs for the different models. Specifically, the MNLCLP takes advantage of locating devices outside the edges of the network to cover edges with a high volume, whereas the restricted versions do not have such flexibility. In Figure \ref{fig:foss}, one can observe that, for the \texttt{foss} network, there is a high concentration of weighted volume at the edges in the bottom right corner of the network. Thus, the three models try to locate the devices to cover that demand. In the MNLCLP, a single device (outside the edges and nodes) suffices to cover most of the demand, whereas the restricted models require two or three devices to cover a similar amount of volume. This flexibility directly affects global coverage. For this network, our model was able to cover $10\%$ and $20\%$  more volume  than the edge and node-restricted models, respectively. The situation for the \texttt{rural} network (Figure \ref{fig:rural}) is even more impressive since our model obtained a solution with more than $30\%$ coverage than the restricted models.

\begin{figure}[h]
\centering
\includegraphics[width=0.33\linewidth]{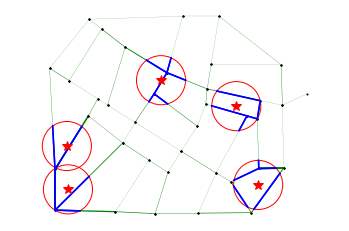}~\includegraphics[width=0.35\linewidth]{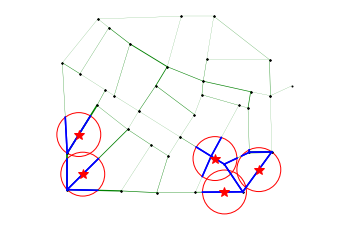}~
\includegraphics[width=0.35\linewidth]{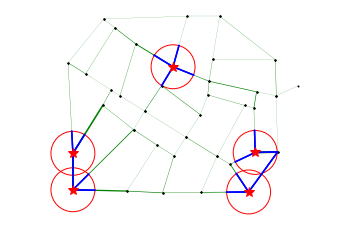}
\caption{Solutions of the MNLCLP (left), MNLCLP-edge restricted (center), and MNLCLP-node restricted (right) for instance  \texttt{foss} ($p=5$).\label{fig:foss}}
\end{figure}

\begin{figure}[h]
\centering
\includegraphics[width=0.33\linewidth]{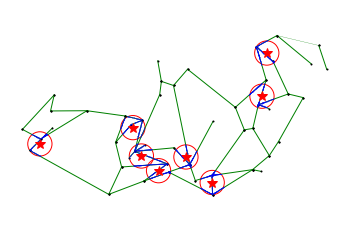}~\includegraphics[width=0.35\linewidth]{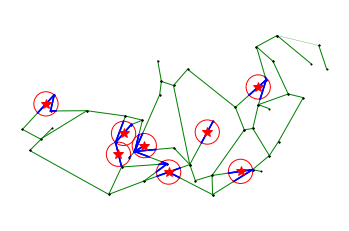}~
\includegraphics[width=0.35\linewidth]{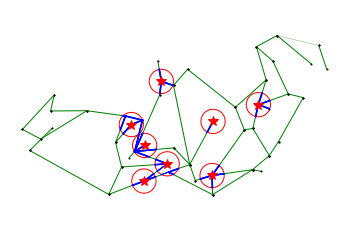}
\caption{Solutions of the MNLCLP (left), MNLCLP-edge restricted (center), and MNLCLP-node restricted (right) for instance  \texttt{rural} ($p=8$).\label{fig:rural}}
\end{figure}

One can conclude that our model is adequate to locate devices that maximize the volume coverage of a network in case they can be located at any place of the space where the network lives, as in the case of leak detection devices, since the obtained solutions significantly outperform (in coverage) the classical node and edge-restricted versions of the problem.

\section{Conclusions and Future Research}\label{sec:6}

In this paper, we study a covering location problem with direct application to the determination of optimal positions of leak detection devices in urban pipeline networks. We propose a general framework for two different versions of the problem. On the one hand, in case the number of devices is known, we derive the Maximal Network Length Covering Location Problem whose goal is to maximize the length of the network for which the device can detect the leak. On the other hand, in case the number of devices is unknown, the Partial Set Network Length Covering Location Problem aims to minimize the number of devices to locate to be able to detect the leaks in a given percent of the length of the network. We derive a method to construct initial
solutions as well as a math-heuristic algorithm. We run our models on different real-world urban water supply pipeline networks and compare the performance of the different proposals. 

Future research lines on the topic include incorporating more sophisticated coverage shapes for the devices, as non-convex shapes obtained by the union of different polyhedral and $\ell_\tau$-norm balls. It would require a further study of $\tau$-order cone constraints, as well as the representation of the union by means of disjunctive constraints, being then a challenge to provide solutions for real-world networks. In this case, it would be advisable to design efficient heuristic approaches capable to scale to large networks adequately.

\section*{Acknowledgements}

The authors of this research acknowledge financial support by the Spanish Ministerio de Ciencia y Tecnologia, Agencia Estatal de Investigacion and Fondos Europeos de Desarrollo Regional (FEDER) via project PID2020-114594GB-C21 and AEI grant number RED2022-
134149-T (Thematic Network: Location Science and Related Problems). The authors also acknowledge partial support from projects FEDER-US-1256951, Junta de Andalucía P18-FR-1422, P18-FR-2369,  B-FQM-322-UGR20, NetmeetData: Ayudas Fundación BBVA a equipos de investigación cient\'ifica
2019, and the IMAG-Maria de Maeztu grant CEX2020-001105-M /AEI /10.13039/501100011033. The first author also acknowledges the financial support of the European Union-Next GenerationEU through the program``Ayudas para la  Recualificaci\'on del Sistema Universitario Espa\~nol 2021-2023''.

\appendix

\section{Proof of Lemma \ref{lema:punto-seg}}\label{appendix}

\begin{proof}
Let $S$ be the intersection point between the line induced by $e$, $r$, and its orthogonal line passing through the point $Q$. We denote by $\mu$ the parameterization of $S$ in the ray induced by the segment pointed at $o_e$.  Thus, $\|Q-S\|=\min\{\|Q-T\|: T \in r\}$. Since $S\in r$, one can parameterize $S$ as $S= (1-\mu)o_e + \mu f_e$ for some $\mu \in \R$. Let us analyze the different possible values for $\mu$:
\begin{itemize}
\item If $\mu\in [0,1]$, one gets that: 
\begin{align*}
\|Q-(\mu(f_e-o_e)+o_e)\|=\|Q-S\|&=\min\{\|Q-T\|: T \in r\}\\
&\leq \min\{\|Q-T\|: T \in e\}=\delta(e,Q).
\end{align*}
\item If $\mu<0$, will show that $\delta(e,Q)=\|Q-o_e\|$. Let $\lambda\in [0,1]$ and $X=(1-\lambda) o_e + \lambda f_e \in e$. Then:	 \begin{align*}
	 \|Q-o_e\|^2 & = \|Q-S\|^2 + \|S-o_e\|^2  =\|Q-S\|^2 + \|\mu (f_e-o_e)+o_e-o_e\|^2\\
	 & =\|Q-S\|^2 + |\mu|^2\|(f_e-o_e)\|^2  \leq \|Q-S\|^2 + |(\mu-\lambda)|^2\|(f_e-o_e)\|^2\\
	 & =\|Q-S\|^2 + \|\mu(f_e-o_e)+o_e-(\lambda(f_e-o_e)+o_e)\|^2 =\|Q-S\|^2 + \|S-X\|^2\\
	 &=\|Q-X\|^2.
	 \end{align*}
\item In case $\mu>1$, let us see that $\delta(e,Q)=\|Q-f_e\|$. Let $\lambda\in [0,1]$ and $X=\lambda (f_e-o_e)+o_e$ be in $e$:
	 \begin{align*}
	 \|Q-f_e\|^2 & = \|Q-S\|^2 + \|S-f_e\|^2 =\|Q-S\|^2 + \|\mu (f_e-o_e)+o_e-f_e\|^2\\
	 & =\|Q-S\|^2 + \|\mu (f_e-o_e)+o_e-f_e+o_e-o_e\|^2\\
	 & =\|Q-S\|^2 + |\mu-1|^2\|(f_e-o_e)\|^2\\
	 & \leq \|Q-S\|^2 + |(\mu-\lambda)|^2\|(f_e-o_e)\|^2\\
	 & =\|Q-S\|^2 + \|\mu(f_e-o_e)+o_e-(\lambda(f_e-o_e)+o_e)\|^2\\
	 & =\|Q-S\|^2 + \|S-X\|^2\\
	 &=\|Q-X\|^2.
	 \end{align*}
	 \end{itemize}
	 
	Summarizing, we get that the point in $e$ closest to $Q$ is in the form $(1-\lambda)o_e + \lambda f_e$ with $$\lambda=\begin{cases}
	 0 & \mbox{if $\mu<0$,}\\
	 \mu & \mbox{if $0\leq \mu \leq 1$,}\\
	 1 & \mbox{if $\mu>1$.}
	 \end{cases}$$
	 that is, $\lambda = \min\{\max\{0,\mu\}, 1\}$, being then $\delta(e,Q) =\|Q -  (\min\{\max\{0,\mu\}, 1\} (f_e-o_e) + o_e)\|.$
\end{proof}


\begin{thebibliography}{29}
\expandafter\ifx\csname natexlab\endcsname\relax\def\natexlab#1{#1}\fi
\providecommand{\url}[1]{\texttt{#1}}
\providecommand{\href}[2]{#2}
\providecommand{\path}[1]{#1}
\providecommand{\DOIprefix}{doi:}
\providecommand{\ArXivprefix}{arXiv:}
\providecommand{\URLprefix}{URL: }
\providecommand{\Pubmedprefix}{pmid:}
\providecommand{\doi}[1]{\href{http://dx.doi.org/#1}{\path{#1}}}
\providecommand{\Pubmed}[1]{\href{pmid:#1}{\path{#1}}}
\providecommand{\bibinfo}[2]{#2}
\ifx\xfnm\relax \def\xfnm[#1]{\unskip,\space#1}\fi
%Type = Article
\bibitem[{Berman et~al.(2016)Berman, Kalcsics and Krass}]{berman2016covering}
\bibinfo{author}{Berman, O.}, \bibinfo{author}{Kalcsics, J.},
  \bibinfo{author}{Krass, D.}, \bibinfo{year}{2016}.
\newblock \bibinfo{title}{On covering location problems on networks with edge
  demand}.
\newblock \bibinfo{journal}{Computers \& Operations Research}
  \bibinfo{volume}{74}, \bibinfo{pages}{214--227}.
%Type = Article
\bibitem[{Berman and Wang(2011)}]{berman2011minmax}
\bibinfo{author}{Berman, O.}, \bibinfo{author}{Wang, J.}, \bibinfo{year}{2011}.
\newblock \bibinfo{title}{The minmax regret gradual covering location problem
  on a network with incomplete information of demand weights}.
\newblock \bibinfo{journal}{European Journal of Operational Research}
  \bibinfo{volume}{208}, \bibinfo{pages}{233--238}.
%Type = Article
\bibitem[{Blanco and G{\'a}zquez(2021)}]{blanco2021continuous}
\bibinfo{author}{Blanco, V.}, \bibinfo{author}{G{\'a}zquez, R.},
  \bibinfo{year}{2021}.
\newblock \bibinfo{title}{Continuous maximal covering location problems with
  interconnected facilities}.
\newblock \bibinfo{journal}{Computers \& Operations Research}
  \bibinfo{volume}{132}, \bibinfo{pages}{105310}.
%Type = Article
\bibitem[{Blanco et~al.(2022)Blanco, G{\'a}zquez and Saldanha-da
  Gama}]{blanco2021multitype}
\bibinfo{author}{Blanco, V.}, \bibinfo{author}{G{\'a}zquez, R.},
  \bibinfo{author}{Saldanha-da Gama, F.}, \bibinfo{year}{2022}.
\newblock \bibinfo{title}{Multitype maximal covering location problems:
  Hybridizing discrete and continuous problems}.
\newblock \bibinfo{journal}{European Journal of Operational Research}
  \DOIprefix\doi{doi.org/10.1016/j.ejor.2022.10.037}.
%Type = Article
\bibitem[{Blanco et~al.(2014)Blanco, Puerto and El~Haj
  Ben~Ali}]{blanco2014revisiting}
\bibinfo{author}{Blanco, V.}, \bibinfo{author}{Puerto, J.},
  \bibinfo{author}{El~Haj Ben~Ali, S.}, \bibinfo{year}{2014}.
\newblock \bibinfo{title}{Revisiting several problems and algorithms in
  continuous location with $\ell_\tau$ norms}.
\newblock \bibinfo{journal}{Computational Optimization and Applications}
  \bibinfo{volume}{58}, \bibinfo{pages}{563--595}.
%Type = Article
\bibitem[{Blanquero et~al.(2016)Blanquero, Carrizosa, Bogl{\'a}rka
  et~al.}]{blanquero2016maximal}
\bibinfo{author}{Blanquero, R.}, \bibinfo{author}{Carrizosa, E.},
  \bibinfo{author}{Bogl{\'a}rka, G.}, et~al., \bibinfo{year}{2016}.
\newblock \bibinfo{title}{Maximal covering location problems on networks with
  regional demand}.
\newblock \bibinfo{journal}{Omega} \bibinfo{volume}{64},
  \bibinfo{pages}{77--85}.
%Type = Article
\bibitem[{Casillas et~al.(2013)Casillas, Puig, Garza-Castan{\'o}n and
  Rosich}]{casillas2013optimal}
\bibinfo{author}{Casillas, M.V.}, \bibinfo{author}{Puig, V.},
  \bibinfo{author}{Garza-Castan{\'o}n, L.E.}, \bibinfo{author}{Rosich, A.},
  \bibinfo{year}{2013}.
\newblock \bibinfo{title}{Optimal sensor placement for leak location in water
  distribution networks using genetic algorithms}.
\newblock \bibinfo{journal}{Sensors} \bibinfo{volume}{13},
  \bibinfo{pages}{14984--15005}.
%Type = Article
\bibitem[{Cody et~al.(2020a)Cody, Dey and Narasimhan}]{cody2020linear}
\bibinfo{author}{Cody, R.A.}, \bibinfo{author}{Dey, P.},
  \bibinfo{author}{Narasimhan, S.}, \bibinfo{year}{2020}a.
\newblock \bibinfo{title}{Linear prediction for leak detection in water
  distribution networks}.
\newblock \bibinfo{journal}{Journal of Pipeline Systems Engineering and
  Practice} \bibinfo{volume}{11}, \bibinfo{pages}{04019043}.
%Type = Article
\bibitem[{Cody et~al.(2020b)Cody, Tolson and Orchard}]{cody2020detecting}
\bibinfo{author}{Cody, R.A.}, \bibinfo{author}{Tolson, B.A.},
  \bibinfo{author}{Orchard, J.}, \bibinfo{year}{2020}b.
\newblock \bibinfo{title}{Detecting leaks in water distribution pipes using a
  deep autoencoder and hydroacoustic spectrograms}.
\newblock \bibinfo{journal}{Journal of Computing in Civil Engineering}
  \bibinfo{volume}{34}, \bibinfo{pages}{04020001}.
%Type = Article
\bibitem[{El-Abbasy et~al.(2016)El-Abbasy, Mosleh, Senouci, Zayed and
  Al-Derham}]{el2016locating}
\bibinfo{author}{El-Abbasy, M.S.}, \bibinfo{author}{Mosleh, F.},
  \bibinfo{author}{Senouci, A.}, \bibinfo{author}{Zayed, T.},
  \bibinfo{author}{Al-Derham, H.}, \bibinfo{year}{2016}.
\newblock \bibinfo{title}{Locating leaks in water mains using noise loggers}.
\newblock \bibinfo{journal}{Journal of Infrastructure Systems}
  \bibinfo{volume}{22}, \bibinfo{pages}{04016012}.
%Type = Article
\bibitem[{El-Abbasy et~al.(2014)El-Abbasy, Senouci, Zayed, Mirahadi and
  Parvizsedghy}]{el2014condition}
\bibinfo{author}{El-Abbasy, M.S.}, \bibinfo{author}{Senouci, A.},
  \bibinfo{author}{Zayed, T.}, \bibinfo{author}{Mirahadi, F.},
  \bibinfo{author}{Parvizsedghy, L.}, \bibinfo{year}{2014}.
\newblock \bibinfo{title}{Condition prediction models for oil and gas pipelines
  using regression analysis}.
\newblock \bibinfo{journal}{Journal of Construction Engineering and Management}
  \bibinfo{volume}{140}, \bibinfo{pages}{04014013}.
%Type = Article
\bibitem[{El-Zahab and Zayed(2019)}]{el2019leak}
\bibinfo{author}{El-Zahab, S.}, \bibinfo{author}{Zayed, T.},
  \bibinfo{year}{2019}.
\newblock \bibinfo{title}{Leak detection in water distribution networks: an
  introductory overview}.
\newblock \bibinfo{journal}{Smart Water} \bibinfo{volume}{4},
  \bibinfo{pages}{1--23}.
%Type = Inproceedings
\bibitem[{Fantozzi et~al.(2009)Fantozzi, Calza and
  Lambert}]{fantozzi2009experience}
\bibinfo{author}{Fantozzi, M.}, \bibinfo{author}{Calza, F.},
  \bibinfo{author}{Lambert, A.}, \bibinfo{year}{2009}.
\newblock \bibinfo{title}{Experience and results achieved in introducing
  district metered areas (dma) and pressure management areas (pma) at enia
  utility (italy)}, in: \bibinfo{booktitle}{Proceedings of the 5th IWA Water
  Loss Reduction Specialist Conference}, pp. \bibinfo{pages}{153--160}.
%Type = Article
\bibitem[{Garc{\'\i}a and Mar{\'\i}n(2015)}]{garcia2015covering}
\bibinfo{author}{Garc{\'\i}a, S.}, \bibinfo{author}{Mar{\'\i}n, A.},
  \bibinfo{year}{2015}.
\newblock \bibinfo{title}{Covering location problems}.
\newblock \bibinfo{journal}{Location science} , \bibinfo{pages}{93--114}.
%Type = Inproceedings
\bibitem[{Hamilton(2009)}]{hamilton2009alc}
\bibinfo{author}{Hamilton, S.}, \bibinfo{year}{2009}.
\newblock \bibinfo{title}{Alc in low pressure areas---it can be done}, in:
  \bibinfo{booktitle}{Proceedings of 5th IWA Water Loss Reduction Specialist
  Conference}, pp. \bibinfo{pages}{131--137}.
%Type = Article
\bibitem[{Helly(1923)}]{Helly:1923}
\bibinfo{author}{Helly, E.}, \bibinfo{year}{1923}.
\newblock \bibinfo{title}{{\"U}ber mengen konvexer k{\"o}rper mit
  gemeinschaftlichen punkte (in {G}erman)}.
\newblock \bibinfo{journal}{Jahresbericht der Deutschen
  Mathematiker-Vereinigung} \bibinfo{volume}{32}, \bibinfo{pages}{175--176}.
%Type = Article
\bibitem[{Hosseininezhad et~al.(2013)Hosseininezhad, Jabalameli and
  Naini}]{hosseininezhad2013continuous}
\bibinfo{author}{Hosseininezhad, S.J.}, \bibinfo{author}{Jabalameli, M.S.},
  \bibinfo{author}{Naini, S.G.J.}, \bibinfo{year}{2013}.
\newblock \bibinfo{title}{A continuous covering location model with risk
  consideration}.
\newblock \bibinfo{journal}{Applied Mathematical Modelling}
  \bibinfo{volume}{37}, \bibinfo{pages}{9665--9676}.
%Type = Article
\bibitem[{Khulief et~al.(2012)Khulief, Khalifa, Mansour and
  Habib}]{khulief2012acoustic}
\bibinfo{author}{Khulief, Y.}, \bibinfo{author}{Khalifa, A.},
  \bibinfo{author}{Mansour, R.B.}, \bibinfo{author}{Habib, M.},
  \bibinfo{year}{2012}.
\newblock \bibinfo{title}{Acoustic detection of leaks in water pipelines using
  measurements inside pipe}.
\newblock \bibinfo{journal}{Journal of Pipeline Systems Engineering and
  Practice} \bibinfo{volume}{3}, \bibinfo{pages}{47--54}.
%Type = Article
\bibitem[{Li et~al.(2015)Li, Huang, Xin and Tao}]{li2015review}
\bibinfo{author}{Li, R.}, \bibinfo{author}{Huang, H.}, \bibinfo{author}{Xin,
  K.}, \bibinfo{author}{Tao, T.}, \bibinfo{year}{2015}.
\newblock \bibinfo{title}{A review of methods for burst/leakage detection and
  location in water distribution systems}.
\newblock \bibinfo{journal}{Water Science and Technology: Water Supply}
  \bibinfo{volume}{15}, \bibinfo{pages}{429--441}.
%Type = Article
\bibitem[{Li et~al.(2011)Li, Ling, Liu, Zhao, Liu, Chen, Qiang and
  Qu}]{li2011development}
\bibinfo{author}{Li, W.}, \bibinfo{author}{Ling, W.}, \bibinfo{author}{Liu,
  S.}, \bibinfo{author}{Zhao, J.}, \bibinfo{author}{Liu, R.},
  \bibinfo{author}{Chen, Q.}, \bibinfo{author}{Qiang, Z.}, \bibinfo{author}{Qu,
  J.}, \bibinfo{year}{2011}.
\newblock \bibinfo{title}{Development of systems for detection, early warning,
  and control of pipeline leakage in drinking water distribution: A case
  study}.
\newblock \bibinfo{journal}{Journal of Environmental Sciences}
  \bibinfo{volume}{23}, \bibinfo{pages}{1816--1822}.
%Type = Inproceedings
\bibitem[{Mohamed et~al.(2012)Mohamed, El-Hamawi, Al~Hassan, Aslanyan, Filenev,
  Aslanyan, Bargouthi and Salim}]{mohamed2012leak}
\bibinfo{author}{Mohamed, I.N.}, \bibinfo{author}{El-Hamawi, M.},
  \bibinfo{author}{Al~Hassan, R.A.}, \bibinfo{author}{Aslanyan, A.},
  \bibinfo{author}{Filenev, M.}, \bibinfo{author}{Aslanyan, I.},
  \bibinfo{author}{Bargouthi, J.}, \bibinfo{author}{Salim, B.},
  \bibinfo{year}{2012}.
\newblock \bibinfo{title}{Leak detection by temperature and noise logging}, in:
  \bibinfo{booktitle}{Abu Dhabi International Petroleum Conference and
  Exhibition}, \bibinfo{organization}{OnePetro}. p.~\bibinfo{pages}{0}.
%Type = Article
\bibitem[{Murray and Tong(2007)}]{murray2007coverage}
\bibinfo{author}{Murray, A.T.}, \bibinfo{author}{Tong, D.},
  \bibinfo{year}{2007}.
\newblock \bibinfo{title}{Coverage optimization in continuous space facility
  siting}.
\newblock \bibinfo{journal}{International Journal of Geographical Information
  Science} \bibinfo{volume}{21}, \bibinfo{pages}{757--776}.
%Type = Article
\bibitem[{Puust et~al.(2010)Puust, Kapelan, Savic and Koppel}]{puust2010review}
\bibinfo{author}{Puust, R.}, \bibinfo{author}{Kapelan, Z.},
  \bibinfo{author}{Savic, D.}, \bibinfo{author}{Koppel, T.},
  \bibinfo{year}{2010}.
\newblock \bibinfo{title}{A review of methods for leakage management in pipe
  networks}.
\newblock \bibinfo{journal}{Urban Water Journal} \bibinfo{volume}{7},
  \bibinfo{pages}{25--45}.
%Type = Article
\bibitem[{Royal et~al.(2011)Royal, Atkins, Brennan, Chapman, Chen, Cohn, Foo,
  Goddard, Hayes, Hao et~al.}]{royal2011site}
\bibinfo{author}{Royal, A.C.}, \bibinfo{author}{Atkins, P.R.},
  \bibinfo{author}{Brennan, M.J.}, \bibinfo{author}{Chapman, D.N.},
  \bibinfo{author}{Chen, H.}, \bibinfo{author}{Cohn, A.G.},
  \bibinfo{author}{Foo, K.Y.}, \bibinfo{author}{Goddard, K.F.},
  \bibinfo{author}{Hayes, R.}, \bibinfo{author}{Hao, T.}, et~al.,
  \bibinfo{year}{2011}.
\newblock \bibinfo{title}{Site assessment of multiple-sensor approaches for
  buried utility detection}.
\newblock \bibinfo{journal}{International Journal of Geophysics}
  \bibinfo{volume}{2011}.
%Type = Book
\bibitem[{Seckler et~al.(1998)}]{seckler1998world}
\bibinfo{author}{Seckler, D.W.}, et~al., \bibinfo{year}{1998}.
\newblock \bibinfo{title}{World water demand and supply, 1990 to 2025:
  Scenarios and issues}. volume~\bibinfo{volume}{19}.
\newblock \bibinfo{publisher}{Iwmi}.
%Type = Article
\bibitem[{Tedeschi and Andretta(2021)}]{tedeschi2021new}
\bibinfo{author}{Tedeschi, D.}, \bibinfo{author}{Andretta, M.},
  \bibinfo{year}{2021}.
\newblock \bibinfo{title}{New exact algorithms for planar maximum covering
  location by ellipses problems}.
\newblock \bibinfo{journal}{European Journal of Operational Research}
  \bibinfo{volume}{291}, \bibinfo{pages}{114--127}.
%Type = Article
\bibitem[{Tijani et~al.(2022)Tijani, Abdelmageed, Fares, Fan, Hu and
  Zayed}]{tijani2022improving}
\bibinfo{author}{Tijani, I.}, \bibinfo{author}{Abdelmageed, S.},
  \bibinfo{author}{Fares, A.}, \bibinfo{author}{Fan, K.}, \bibinfo{author}{Hu,
  Z.}, \bibinfo{author}{Zayed, T.}, \bibinfo{year}{2022}.
\newblock \bibinfo{title}{Improving the leak detection efficiency in water
  distribution networks using noise loggers}.
\newblock \bibinfo{journal}{Science of the Total Environment}
  \bibinfo{volume}{821}, \bibinfo{pages}{153530}.
%Type = Inproceedings
\bibitem[{Venkateswaran et~al.(2018)Venkateswaran, Han, Eguchi and
  Venkatasubramanian}]{venkateswaran2018impact}
\bibinfo{author}{Venkateswaran, P.}, \bibinfo{author}{Han, Q.},
  \bibinfo{author}{Eguchi, R.T.}, \bibinfo{author}{Venkatasubramanian, N.},
  \bibinfo{year}{2018}.
\newblock \bibinfo{title}{Impact driven sensor placement for leak detection in
  community water networks}, in: \bibinfo{booktitle}{2018 ACM/IEEE 9th
  International Conference on Cyber-Physical Systems (ICCPS)},
  \bibinfo{organization}{IEEE}. pp. \bibinfo{pages}{77--87}.
%Type = Inproceedings
\bibitem[{Walski(1987)}]{Task1987}
\bibinfo{author}{Walski, T.M.}, \bibinfo{year}{1987}.
\newblock \bibinfo{title}{Water supply system rehabilitation}, in:
  \bibinfo{booktitle}{Task Committee on Water Supply Rehabilitation Systems of
  the Water Supply and Resources Management Committee of the Environmental
  Engineering Division of ASCE}, \bibinfo{organization}{American Society of
  Civil Engineers (ASCE)}. p. \bibinfo{pages}{336}.

\end{thebibliography}
\end{document}